\let\uml\"
\numberwithin{equation}{section}
\theoremstyle{plain}
\newtheorem{theorem}{Theorem}[section]
\newtheorem{corollary}[theorem]{Corollary}
\newtheorem{lemma}[theorem]{Lemma}
\newtheorem{proposition}[theorem]{Proposition}
\newtheorem{notation}[theorem]{Notation}
\newtheorem{definition}[theorem]{Definition}
\newtheorem{remark}[theorem]{Remark}
\newtheorem{example}[theorem]{Example}
\newcommand{\bl}[0]{\begin{lemma}}
\newcommand{\el}[0]{\end{lemma}}
\newcommand{\bc}[0]{\begin{corollory}}
\newcommand{\ec}[0]{\end{corollory}}
\newcommand{\bd}[0]{\begin{definition}}
\newcommand{\ed}[0]{\end{definition}}
\newcommand{\bt}[0]{\begin{theorem}}
\newcommand{\et}[0]{\end{theorem}}
\newcommand{\bp}[0]{\begin{proof}}
\newcommand{\ep}[0]{\end{proof}}
\newcommand{\bx}[0]{\begin{example}}
\newcommand{\ex}[0]{\end{example}}
\newcommand{\br}[0]{\begin{remark}}
\newcommand{\er}[0]{\end{remark}}
\newcommand{\bdm}[0]{\begin{displaymath}}
\newcommand{\edm}[0]{\end{displaymath}}
\newcommand{\be}[0]{\begin{equation}}
\newcommand{\ee}[0]{\end{equation}}
\newcommand{\bca}[0]{\begin{caution}}
\newcommand{\eca}[0]{\end{caution}}
\newcommand{\bn}[0]{\begin{note}}
\newcommand{\en}[0]{\end{note}}
\newcommand{\ba}[0]{\begin{axiom}}
\newcommand{\ea}[0]{\end{axiom}}
\newcommand{\bex}[0]{\begin{exercise}}
\newcommand{\eex}[0]{\end{exercise}}
\newcommand{\bpp}[0]{\begin{proposition}}
\newcommand{\epp}[0]{\end{proposition}}
\newcommand{\bnn}[0]{\begin{notation}}
\newcommand{\enn}[0]{\end{notation}}
\newcommand{\bpr}[0]{\begin{property}}
\newcommand{\epr}[0]{\end{property}}
\newcommand{\bo}[0]{\begin{observation}}
\newcommand{\eo}[0]{\end{observation}}
\begin{document}

\title[Invariant subspaces and Curvature]{Covariant representations of subproduct systems: Invariant subspaces and curvature}

\date{\today}

\author[Sarkar]{Jaydeb Sarkar}
\address{ Statistics and Mathematics Unit, Indian Statistical Institute, Bangalore center, 8th Mile, Mysore Road, Bangalore, 560059, India}
\email{jay@isibang.ac.in, jaydeb@gmail.com}

\author[Trivedi]{Harsh Trivedi}
\address{Silver Oak College of Engineering and Technology, Near Bhagwat Vidyapith, Ahmedabad-380061, India.}
\email{harshtrivedi.gn@socet.edu.in, trivediharsh26@gmail.com}

\author[Veerabathiran]{Shankar Veerabathiran}
\address{Ramanujan Institute for Advanced Study in Mathematics,
University of Madras, Chennai (Madras) 600005, India}
\email{shankarunom@gmail.com}

\subjclass[2010]{46L08, 47A13, 47A15, 47B38, 47L30, 47L55, 47L80}
\keywords{Hilbert $C^*$-modules, covariant representations,
subproduct systems, tuples of operators, invariant subspaces,
wandering subspaces, curvatures}

\begin{abstract}
Let $X=(X(n))_{n \in \mathbb{Z_+}}$ be a standard subproduct system
of $C^*$-correspondences over a $C^*$-algebra $\mathcal M.$ Let
$T=(T_n)_{n \in \mathbb{Z_+}}$ be a pure completely contractive,
covariant representation of $X$ on a Hilbert space $\mathcal H$. If
$\mathcal S$ is a closed subspace of $\mathcal H$, then
$\mathcal{S}$ is invariant for $T$ if and only if there exist a
Hilbert space $\mathcal{D}$, a representation $\pi$ of $\mathcal M$
on $\mathcal D,$ and a partial isometry $\Pi:
\mathcal{F}_X\bigotimes_{\pi}\mathcal{D}\to \mathcal{H}$ such that
\[
\Pi (S_n(\zeta)\otimes I_{\mathcal{D}})=T_n(\zeta)\Pi \quad \quad
(\zeta\in X(n), n \in \mathbb{Z_+}),
\]
and $\mathcal S = \mbox{ran~} \Pi$, or equivalently, $P_{\mathcal
S}=\Pi\Pi^*.$ This result leads us to a list of consequences
including Beurling-Lax-Halmos type theorem and other general observations on
wandering subspaces. We extend the notion of curvature for
completely contractive, covariant  representations and analyze it in
terms of the above results.
\end{abstract}

\maketitle
\tableofcontents

\section{Introduction}

Initiated by Gelu Popescu in \cite{MR1670202}, noncommutative
Poisson transforms, and subsequently the explicit and analytic
construction of isometric dilations, have been proved to be an
extremely powerful tools in studying the structure of commuting and
noncommuting tuples of bounded linear operators on Hilbert spaces.
This is also important in noncommutative domains (and subsequently,
noncommutative varieties) classification problems in the operator
algebras (see \cite{MR2643314}, \cite{GP10}, \cite{GP11} and
references therein).

In \cite{MR1668582} Arveson used similar techniques to generalize
Sz.-Nagy and Foias dilation theory for commuting tuple of row
contractions. These techniques have also led to further recent
development \cite{H16, MR1648483, MR2005882, MR2186099, MR2481905,
MR1822685, MR2608451, MR2793449} on the structure of bounded linear
operators in more general settings. In particular, in
\cite{MR2481905} Muhly and Solel introduced Poisson kernel for
completely contractive, covariant representations over
$W^*$-correspondences. The notion of Poisson kernel for completely
contractive, covariant representations over a subproduct system of
$W^*$-correspondences was introduced and studied by Shalit and Solel
in \cite{MR2608451}. This approach was further investigated by
Viselter \cite{MR2793449} for the extension problem of completely contractive, covariant
representations of subproduct systems to $C^*$-representations of
Toeplitz algebras.

Covariant representations on subproduct systems are important since
it is one of the refined theories in operator theory and operator
algebras that provides a unified approach to study commuting as well
as noncommuting tuples of operators on Hilbert spaces.

The main purpose of this paper is to investigate a
Beurling-Lax-Halmos type invariant subspace theorem in the sense of
\cite{MR3346131,J2}, and the notion of curvature in the sense of
Arveson \cite{A2000}, Popescu \cite{MR1822685} and Muhly and Solel
\cite{MR2005882} for completely contractive, covariant representations of
standard subproduct systems.

The plan of the paper is the following. In Section 2, we recall
several basic results from \cite{MR2793449} including the
intertwining property of Poisson kernels. In Section 3 we obtain an
invariant subspace theorem for pure completely contractive, covariant
representations of standard subproduct systems. As an immediate
application we derive a Beurling-Lax-Halmos type theorem. Our
objective in Section 4 is to extend, several results on curvature of
a contractive tuple by Popescu \cite{MR1822685,MR2643314}, for
completely contractive, covariant representations of subproduct
systems. We first define the curvature for completely contractive,
covariant representations of standard subproduct systems. This
approach is based on the definition of curvature for a completely
contractive, covariant representation over a $W^*$-correspondence due
to Muhly and Solel \cite{MR2005882}. The final section is composed of
several results on wandering subspaces which are motivated from our
invariant subspace theorem. This section generalizes \cite[Section
5]{BEDS} on wandering subspaces for commuting tuple of bounded
operators on Hilbert spaces.

\section{Notations and prerequisites}

In this section, we recall some definitions and properties about
$C^*$-correspondences and subproduct systems (see \cite{MR0355613},
\cite{La95}, \cite{MR1648483}, \cite{MR2608451}).

Let $\mathcal M$ be a $C^*$-algebra and let $E$ be a Hilbert
$\mathcal M$-module. Let $\mathcal L(E)$ be the $C^*$-algebra of all
adjointable operators on $E$. The module $E$ is said to be a {\it
$C^*$-correspondence over $\mathcal M$} if it has a left $\mathcal
M$-module structure induced by a non-zero $*$-homomorphism
$\phi:\mathcal M\to \mathcal L(E)$ in the following sense
\[
a\xi:=\phi(a)\xi \quad \quad (a\in\mathcal M, \xi\in E).
\]
All such $*$-homomorphisms considered in this article are
non-degenerate, which means, the closed linear span of
$\phi(\mathcal M)E$ equals $E.$ If $F$ is another
$C^*$-correspondence over $\mathcal M,$ then we get the notion of
tensor product $F\bigotimes_{\phi} E$ (cf. \cite{La95}) which
satisfy the following properties:
\[
(\zeta_1 a)\otimes \xi_1=\zeta_1\otimes \phi(a)\xi_1,
\]
\[
\langle\zeta_1\otimes\xi_1,\zeta_2\otimes\xi_2\rangle=\langle\xi_1,\phi(\langle\zeta_1,\zeta_2\rangle)\xi_2\rangle
\]
for all $\zeta_1,\zeta_2\in F;$ $\xi_1,\xi_2\in E$ and $a\in\mathcal
M.$

Assume $\mathcal M$ to be a $W^*$-algebra and $E$ is a Hilbert
$\mathcal M$-module. If $E$ is self-dual, then $E$ is called a {\it
Hilbert $W^*$-module} over $\mathcal M.$ In this case, $\mathcal
L(E)$ becomes a $W^*$-algebra (cf. \cite{MR0355613}). A
$C^*$-correspondence over $\mathcal M$ is called a {\it
$W^*$-correspondence} if $E$ is self-dual, and if the
$*$-homomorphism $\phi:\mathcal M\to \mathcal L(E)$ is normal. When
$E$ and $F$ are $W^*$-correspondences, then their tensor product
$F\bigotimes_{\phi} E$ is the self-dual extension of the above
tensor product construction.
\begin{definition}
Let  $\mathcal M$ be a $C^*$-algebra, $\mathcal H$ be a Hilbert
space, and $E$ be a $C^*$-correspondence over $\mathcal M$. Assume
$\sigma:\mathcal M\to B(\mathcal H)$ to be a representation and $T:
E\to B(\mathcal H)$ to be a linear map. The tuple $(T,\sigma)$ is
called a {\rm covariant representation} of $E$ on $\mathcal H$ if
\[
T(a\xi a')=\sigma(a)T(\xi)\sigma(a') \quad \quad (\xi\in E,
a,a'\in\mathcal M).
\]
In the $W^*$-set up, we additionally assume that $\sigma$ is normal
and that $T$ is continuous with respect to the $\sigma$-topology of
$E$ (cf. \cite{MR945550}) and ultra weak topology on $B(\mathcal
H)$. The covariant representation is called {\rm completely
contractive} if $T$ is completely contractive. The covariant
representation $(T,\sigma)$ is called {\rm isometric} if
\[
T(\xi)^*T(\zeta)=\sigma(\langle \xi,\zeta\rangle) \quad \quad
(\xi,\zeta\in E).
\]
\end{definition}
The following important lemma is due to Muhly and Solel \cite[Lemma
3.5]{MR1648483}:
\begin{lemma}
The map $(T,\sigma)\mapsto \widetilde T$ provides a bijection
between the collection of all completely contractive, covariant
representations $(T,\sigma)$ of $E$ on $\mathcal H$ and the
collection of all contractive linear maps
$\widetilde{T}:~\mbox{$E\bigotimes_{\sigma} \mathcal H\to \mathcal
H$}$ defined by
\[
\widetilde{T}(\xi\otimes h):=T(\xi)h \quad \quad (\xi\in E,
h\in\mathcal H),
\]
and such that $\widetilde{T}(\phi(a)\otimes I_{\mathcal
H})=\sigma(a)\widetilde{T}$, $a\in\mathcal M$. Moreover, $\widetilde
T$ is isometry if and only if $(T,\sigma)$ is isometric.
\end{lemma}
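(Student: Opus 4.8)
The plan is to verify, in turn, well-definedness, the contractivity/complete-contractivity equivalence, the intertwining relation together with bijectivity, and finally the isometry statement; the second of these is the crux. First I would check that the prescription $\xi\otimes h\mapsto T(\xi)h$ descends to a well-defined linear map on the balanced algebraic tensor product $E\odot\mathcal H$. The only relation to respect is the balancing $\xi a\otimes h=\xi\otimes\sigma(a)h$ for $a\in\mathcal M$, and this is forced by covariance: setting the left multiplier equal to a unit (or, if $\mathcal M$ is non-unital, inserting an approximate identity and using non-degeneracy of $\sigma$) gives $T(\xi a)=T(\xi)\sigma(a)$, whence $\widetilde T(\xi a\otimes h)=T(\xi a)h=T(\xi)\sigma(a)h=\widetilde T(\xi\otimes\sigma(a)h)$.

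Next, and this is the main obstacle, I would establish that $\widetilde T$ is a contraction precisely when $(T,\sigma)$ is completely contractive. For a finite vector $u=\sum_{i=1}^n\xi_i\otimes h_i$, writing $H=(h_1,\dots,h_n)^{t}$, the definitions of $\widetilde T$ and of the tensor-product inner product give
\[
\|\widetilde T u\|^2=\Big\langle H,\big[T(\xi_i)^*T(\xi_j)\big]_{ij}H\Big\rangle,\qquad
\|u\|^2=\Big\langle H,\big[\sigma(\langle\xi_i,\xi_j\rangle)\big]_{ij}H\Big\rangle .
\]
Hence $\|\widetilde T\|\le 1$ is equivalent to the family of operator inequalities $\big[T(\xi_i)^*T(\xi_j)\big]_{ij}\le\big[\sigma(\langle\xi_i,\xi_j\rangle)\big]_{ij}$ in $M_n(B(\mathcal H))$, taken over all finite families $\xi_1,\dots,\xi_n\in E$ and all $n$. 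The real work is to identify this family of inequalities with complete contractivity of $T$: using the canonical operator-space structure on $E$, one describes the matrix norms on $M_n(E)$ and $M_n(B(\mathcal H))$ through the associated Gram matrices and then deduces the inequalities from $\|T\|_{\mathrm{cb}}\le 1$ by a positivity argument. I expect this passage between matrix norms and operator inequalities to be the delicate point.

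The intertwining relation and bijectivity are then routine. For the former I would compute, on elementary tensors,
\[
\widetilde T(\phi(a)\otimes I_{\mathcal H})(\xi\otimes h)=\widetilde T(a\xi\otimes h)=T(a\xi)h=\sigma(a)T(\xi)h=\sigma(a)\widetilde T(\xi\otimes h),
\]
using covariance with the left action. For bijectivity I would exhibit the inverse: given a contraction $\widetilde T:E\bigotimes_\sigma\mathcal H\to\mathcal H$ with $\widetilde T(\phi(a)\otimes I_{\mathcal H})=\sigma(a)\widetilde T$, set $T(\xi)h:=\widetilde T(\xi\otimes h)$; this $T(\xi)$ is bounded, the intertwining relation together with the balancing of the tensor product yields the covariance identity $T(a\xi a')=\sigma(a)T(\xi)\sigma(a')$, and the equivalence just established makes $(T,\sigma)$ completely contractive. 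The assignments $(T,\sigma)\mapsto\widetilde T$ and $\widetilde T\mapsto(T,\sigma)$ are visibly mutually inverse.

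Finally, for the isometry statement I would polarize. For $\xi,\zeta\in E$ and $h,k\in\mathcal H$,
\[
\langle\widetilde T(\xi\otimes h),\widetilde T(\zeta\otimes k)\rangle=\langle h,T(\xi)^*T(\zeta)k\rangle,\qquad
\langle\xi\otimes h,\zeta\otimes k\rangle=\langle h,\sigma(\langle\xi,\zeta\rangle)k\rangle,
\]
so that $\widetilde T^*\widetilde T=I$ on the dense algebraic tensor product if and only if $T(\xi)^*T(\zeta)=\sigma(\langle\xi,\zeta\rangle)$ for all $\xi,\zeta\in E$, which is exactly the condition that $(T,\sigma)$ be isometric.
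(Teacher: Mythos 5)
This lemma is quoted in the paper without proof --- it is recalled verbatim from Muhly and Solel \cite[Lemma 3.5]{MR1648483} --- so the benchmark is the argument there, and your outline does follow the same standard route: well-definedness on the balanced tensor product via $T(\xi a)=T(\xi)\sigma(a)$, the intertwining computation on elementary tensors, the explicit inverse $\widetilde T\mapsto T(\xi)h:=\widetilde T(\xi\otimes h)$, and polarization for the isometry claim. All of those routine parts are correct. The problem is that the one substantive step --- the equivalence between contractivity of $\widetilde T$ and \emph{complete} contractivity of $T$ --- is precisely where you stop. You correctly reduce $\|\widetilde T\|\le 1$ to the family of Gram inequalities $\bigl[T(\xi_i)^*T(\xi_j)\bigr]_{ij}\le\bigl[\sigma(\langle\xi_i,\xi_j\rangle)\bigr]_{ij}$, but the passage between these inequalities and the matrix-norm condition $\|T\|_{\mathrm{cb}}\le 1$ is only announced (``by a positivity argument \dots\ I expect this passage \dots\ to be the delicate point''). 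As written, neither direction of the equivalence is established, and since your bijectivity argument and your extension of $\widetilde T$ from the algebraic tensor product to $E\bigotimes_\sigma\mathcal H$ both pivot on it, this is a genuine gap rather than a cosmetic omission.

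For the record, the step is closed by two factorizations. Given $\|\widetilde T\|\le 1$ and $[\xi_{ij}]\in M_n(E)$, define $\Xi:\mathcal H^n\to (E\bigotimes_\sigma\mathcal H)^n$ by $\Xi(h_j)_j=\bigl(\sum_j\xi_{ij}\otimes h_j\bigr)_i$; then $[T(\xi_{ij})]=(\widetilde T\oplus\cdots\oplus\widetilde T)\,\Xi$ and $\Xi^*\Xi=\sigma^{(n)}\bigl(\langle[\xi_{ij}],[\xi_{ij}]\rangle_{M_n(\mathcal M)}\bigr)$, so contractivity of the $*$-homomorphism $\sigma^{(n)}$ gives $\|[T(\xi_{ij})]\|\le\|[\xi_{ij}]\|_{M_n(E)}$, i.e.\ $T$ is completely contractive. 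Conversely, given $\|T\|_{\mathrm{cb}}\le 1$ and $u=\sum_{i}\xi_i\otimes h_i$, set $G=[\langle\xi_i,\xi_j\rangle]\ge 0$ in $M_n(\mathcal M)$ and, for $\varepsilon>0$, $[\eta_1,\dots,\eta_n]:=[\xi_1,\dots,\xi_n](G+\varepsilon I)^{-1/2}$, a row of norm at most $1$; then $u=\sum_i\eta_i\otimes k_i$ with $k=\sigma^{(n)}\bigl((G+\varepsilon I)^{1/2}\bigr)h$, whence
\begin{equation*}
\|\widetilde T u\|\;=\;\bigl\|[T(\eta_1),\dots,T(\eta_n)]\,k\bigr\|\;\le\;\|k\|\;\xrightarrow[\varepsilon\to 0]{}\;\|u\|.
\end{equation*}
Without this (or an equivalent operator-space argument), your proposal proves only that $\widetilde T$ is contractive iff the Gram inequalities hold, which is a reformulation, not the lemma. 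A secondary point: in the $W^*$-setting the definition in the paper additionally requires $\sigma$ normal and $T$ continuous from the $\sigma$-topology of $E$ to the ultraweak topology, and your inverse construction would need to verify that these pass between $(T,\sigma)$ and $\widetilde T$; since the paper quotes the $C^*$-version this is minor, but worth a remark.
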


\begin{example}\label{eg-1}
Assume $E$ to be a Hilbert space with an orthonormal basis
$\{e_i\}^n_{i=1}.$ Any contractive tuple $(T_1, \ldots,T_n)$ on a
Hilbert space $\mathcal H$ can be realized as a completely
contractive, covariant representation $(T,\sigma)$ of $E$ on
$\mathcal H$ where $T(e_i):=T_i$ for each $1\leq i\leq n,$ and when
the representation $\sigma$ maps every complex number $\lambda$ to
the multiplication operator by $\lambda$.
\end{example}

Now we recall several definitions and results from \cite{MR2793449}
which are essential for our objective. We will use $A^*$-algebra, to
denote either $C^*$-algebra or $W^*$-algebra, to avoid repetitions
in statements. Similarly we also use $A^*$-module and
$A^*$-correspondence.
\begin{definition}\label{def2}
Let $\mathcal{M}$ to be an $A^*$-algebra and $X=(X(n))_{n \in
\mathbb{Z_+}}$ be a sequence of $A^*$-correspondences over
$\mathcal{M}$. Then $X$ is said to be a {\rm subproduct system} over
$\mathcal{M}$ if $X(0)=\mathcal{M},$ and for each $n,m \in
\mathbb{Z_+}$ there exist a coisometric, adjointable bimodule
function
$$U_{n,m}:X(n)\mbox{$\bigotimes$} X(m) \to X(n+m),$$ such that
\begin{itemize}
\item[(a)] the maps $U_{n,0}$ and $U_{0,n}$ are the right and the left actions of $\mathcal{M}$ on $X(n)$,
respectively, that is,
\[
U_{n,0}(\zeta \otimes a):=\zeta a,~U_{0,n}(a \otimes \zeta):=a\zeta
\quad \quad (\zeta\in X(n),~a \in \mathcal M, n\in\mathbb{Z_+}),
\]
\item [(b)] the following associativity property holds for all $n,m,l \in \mathbb{Z_+}$;
\[
U_{n+m,l}(U_{n,m}\otimes I_{X(l)})=U_{n,m+l}(I_{X(n)} \otimes
U_{m,l}).
\]
If each coisometric maps are unitaries, then we say the family $X$
is a {\rm product system}.
\end{itemize}
\end{definition}
\begin{definition} \label{def1}
Let $\mathcal{M}$ be an $A^*$-algebra and let $ X=(X(n))_{n \in
\mathbb{Z_+} }$ be a subproduct system over $\mathcal{M}$. Assume
$T=(T_n)_{n \in \mathbb{Z_+}}$ to be a family of linear
transformations $T_n: X(n) \to B(\mathcal{H})$, and define
$\sigma:=T_0$. Then the family $T$ is called a {\rm completely
contractive, covariant representation of $X$ on $\mathcal{H}$} if
\begin{itemize}
\item
[(i)] for every $n \in \mathbb{Z_+}$, the pair $(T_n, \sigma)$ is a completely contractive, covariant representation of the $A^*$-correspondence $X(n)$ on $\mathcal{H}$, and \\
\item
[(ii)] for every $n,m \in \mathbb{Z_+},~ \zeta \in X(n)$ and $ \eta \in X(m)$,
\be\label{eqn1} T_{n+m}(U_{n,m}(\zeta \otimes \eta))=T_n(\zeta)T_m(\eta). \ee
\end{itemize}
\end{definition}
For $n \in \mathbb{Z_+}$ define the contractive linear map
$\widetilde{T}_n:X(n) \bigotimes_{\sigma} \mathcal{H} \to
\mathcal{H}$ as (see \cite{MR1648483})
\begin{equation}\label{eqn3}
\widetilde{T}_n(\zeta \otimes h):=T_n(\zeta)h \quad \quad (\zeta \in
X(n),~h \in \mathcal{H}).
\end{equation}
Thus we can replace (\ref{eqn1}) by
\[
\widetilde{T}_{n+m}(U_{n,m} \otimes
I_{\mathcal{H}})=\widetilde{T}_n(I_{X(n)} \otimes \widetilde{T}_m).
\]

\begin{example}\label{eg-2}
The Fock space $\mathcal{F}_X:=\bigoplus_{n \in \mathbb{Z_+}}X(n)$
of a subproduct system $X=(X(n))_{n \in \mathbb{Z_+}}$  is an
$A^*$-correspondence over $\mathcal{M}$. For each $n \in
\mathbb{Z_+}$, we define a linear map $S^X_n: X(n) \to
\mathcal{L}(\mathcal F_X) $ by
$$S^X_n(\zeta)\eta:=U_{n,m}(\zeta \otimes \eta)$$ for every $m \in
\mathbb{Z_+},$  $\zeta\in X(n)$ and $\eta \in X(m)$. When $ n \neq
0$ we call each operator $S^X_n$ a {\it creation operator} of
$\mathcal{F}_X$, and the family $S^X:=(S^X_n)_{n \in  \mathbb{Z_+}}$
is called an {\it $X$-shift}. It is easy to verify that the family
$S^X$ is indeed a completely contractive, covariant representation
of $\mathcal{F}_X$. From the Definition \ref{def2} it is easy to see
that, for each $a\in \mathcal{M},$ the map  $S^X_0(a)= \phi
_{\infty}(a):\mathcal{F}_X\to \mathcal{F}_X$ maps
$(b,\zeta_1,\zeta_2,\ldots)\mapsto (a b, a\zeta_1,a
\zeta_2,\ldots)$.
\end{example}

Let $\mathcal M$ to be an $A^*$-algebra, and let $X=(X(n))_{n \in
\mathbb{Z_+}}$ be an $A^*$-correspondences over $\mathcal{M}$. Then
$X$ is said to be a \textit{standard subproduct system} if
$X(0)=\mathcal{M},$ and for any $n,m \in \mathbb{Z_+}$ the bimodule
$X(n+m)$ is an orthogonally complementable sub-module of
$X(n)\bigotimes X(m)$.

Let $X=(X(n))_{n \in \mathbb{Z_+}}$ be a standard subproduct system
and $E:=X(1)$. Then for each $n,$ the bi-module $X(n)$ is an
orthogonally complementable sub-module of $E^{\otimes n}$ (here
$E^{\otimes 0}=\mathcal{M}$), and hence there exists an  orthogonal
projection $p_n \in \mathcal{L}(E^{\otimes n})$ of $E^{\otimes n}$
onto $X(n)$. We denote the orthogonal projection $\bigoplus_{n \in
\mathbb{Z_+}}p_n$ of $\mathcal{F}_E$, the Fock space of the product
system $E=(E^{\otimes n})_{n \in \mathbb{Z_+}}$ with trivial
unitaries, onto $\mathcal{F}_X$ by $P$.

Note also that here the projections $(p_n)_{n \in \mathbb{Z_+}}$ are
bimodule maps and
\[
p_{n+m}=p_{n+m}(I_{E^{\otimes n}} \otimes p_{m})=p_{n+m}(p_{n}
\otimes I_ {E^{\otimes m}}),
\]
for all $n,m \in \mathbb{Z_+}$. This implies that if we define each
$U_{n,m}$ to be the projection ${p_{n+m}}$ restricted to
${X(n)\bigotimes X(m)},$ then every standard subproduct system
becomes a subproduct system over $\mathcal{M}.$ In this case
(\ref{eqn1}) reduces to
\[
T_{n+m}(p_{n+m}(\zeta \otimes \eta))=T_n(\zeta)T_m(\eta)~\mbox{for
all}~\zeta\in E^{\otimes n}\mbox{ and }\eta\in E^{\otimes m},
\]
and (\ref{eqn3}) becomes \be \label{eqn6}
\widetilde{T}_{n+m}(p_{n+m} \otimes I_{\mathcal{H}})|_{X(n)
\bigotimes X(m) \bigotimes_{\sigma}
\mathcal{H}}=\widetilde{T}_n(I_{X(n)} \otimes \widetilde{T}_m). \ee
Taking adjoints on both the sides we obtain \be \label{eqn5}
{\widetilde{T}}_{n+m}^*=(I_{X(n)} \otimes
{\widetilde{T}_m}^*){\widetilde{T}_n}^* \quad \quad (n,m\in
\mathbb{Z_+}).\ee Note that for the sake of convenience we ignored
the embedding of $X(n+m) \bigotimes_{\sigma} \mathcal{H}$ into
$X(n)\bigotimes X(m) \bigotimes_{\sigma} \mathcal{H}$ in the
previous formula. We further deduce that \be ~\label{eqn4}
{\widetilde{T}_{n+1}}^*=(I_E \otimes
{\widetilde{T}_n}^*){\widetilde{T}_1}^*=(I_{X(n)} \otimes
{\widetilde{T}_1}^*){\widetilde{T}_n}^*,\ee and
\[
{\widetilde{T}_n}^*=(I_{X(n-1)} \otimes
{\widetilde{T}_1}^*)(I_{X(n-2)} \otimes
{\widetilde{T}_1}^*)\ldots(I_E \otimes
{\widetilde{T}_1}^*){\widetilde{T}_1}^*,
\]
for all $n \in \mathbb{Z_+}$.

\begin{example}\label{eg-3}
If $X(n)$ is the $n$-fold symmetric tensor product of the Hilbert
space $X(1),$ then $X=(X(n))_{n\in\mathbb Z_+}$ becomes a standard
subproduct system of Hilbert spaces (cf. \cite[Example
1.3]{MR2608451}). Moreover, let $\{e_1,\ldots,e_d\}$ be an
orthonormal basis of $X(1)$. Then
\[
T \leftrightarrow (T_1(e_1),T_1(e_2),\ldots,T_1(e_d))
\]
induces a bijection between the set of all completely contractive
covariant representations $T$ of $X$ on a Hilbert space $\mathcal H$
onto the collection of all commuting row contractions $(T_1,
\ldots,T_d)$  on $\mathcal H$ (cf. \cite[Example 5.6]{MR2608451}).
\end{example}

\vspace{0.2in}

Before proceeding to the notion of Poisson kernels, we make a few
comments:

\begin{enumerate}

\item  We use the symbol sot-$\lim$ for the limit
with respect to the strong operator topology. From Equation
\ref{eqn4} we infer that $\{\widetilde{T}_n {\widetilde{T}_n^*}\}_{n
\in \mathbb{Z_+}}$ is a decreasing sequence of positive
contractions, and thus $Q:=\mbox{sot-}\displaystyle\lim_{n
\rightarrow \infty}\widetilde{T}_n {\widetilde{T}_n^*}$ exists. If
$Q=0$, then we say that the covariant representation $T$ is {\it
pure}. Note that $T$ is pure if and only if
$\mbox{sot-}\displaystyle\lim_{n \rightarrow
\infty}{\widetilde{T}_n^*}=0$.

\item Let $\psi$ be a representation of $\mathcal M$ on a Hilbert
space $\mathcal E$. Then the induced covariant representation $S
\otimes I_{\mathcal E}:=(S_n(\cdot)\otimes I_{\mathcal
E})_{n\in\mathbb{Z}_+}$ is pure, where each $S_n(\cdot)\otimes
I_{\mathcal E}$ is an operator from $X(n)$ into $B(\mathcal F_X
\bigotimes_\psi \mathcal E)$.

\item It is proved in \cite[Lemma 6.1]{MR2608451}
that every subproduct system is isomorphic to a standard subproduct
system. Therefore it is enough to consider standard subproduct
systems.
\end{enumerate}

\vspace{0.2in}

Let $T=(T_n)_{n \in \mathbb{Z_+}}$ be a completely contractive,
covariant  representation of a standard subproduct system
$X=(X(n))_{n \in \mathbb{Z_+}}$. We denote the positive operator
$(I_{\mathcal{H}}- \widetilde{T}_1 {\widetilde{T}_1^*} )^{1/2} \in
B(\mathcal{H})$ by $\triangle_*(T)$ and the defect space
$\overline{\mbox{\rm Im} ~\triangle_*(T)}$ by $\mathcal{D}$. It is
proved in \cite[Proposition 2.9]{MR2793449} that $\triangle_*(T) \in
\sigma(\mathcal{M})^\prime.$ Therefore $\mathcal{D}$ reduces
$\sigma(a)$ for each $a \in \mathcal{M}$. Thus using the reduced
representation $\sigma ^\prime $ we can form the tensor product of
the Hilbert space $\mathcal{D}$ with $X(n)$ for each $n \in
\mathbb{Z_+},$ and hence with $\mathcal{F}_X .$ For simplicity we
write $\sigma$ instead of $\sigma ^\prime.$ The {\it Poisson kernel}
of $T$ is the operator $K(T):\mathcal{H} \to \mathcal{F}_X
\bigotimes_{\sigma} \mathcal{D}$ defined by
\[
K(T)h:=\sum_{n \in \mathbb{Z_+}}(I_{X(n)}\otimes
\triangle_*(T)){\widetilde{T}_n}^*h \quad \quad (h \in \mathcal{H}).
\]

In the next proposition we recall the properties of the Poisson kernel from \cite{MR2793449}:

\bpp\label{prop2} Let $T=(T_n)_{n \in \mathbb{Z_+}}$ be a completely
contractive, covariant representation of a standard subproduct
system $X=(X(n))_{n \in \mathbb{Z_+}}$ over an $A^*$-algebra
$\mathcal M$. Then $K(T)$ is a contraction and
\[
K(T)^*(S_n(\zeta)\otimes I_{\mathcal{D}})=T_n(\zeta)K(T)^* \quad
\quad (n \in \mathbb{Z_+},~\zeta  \in X(n)).
\]
Moreover, $K(T)$ is an isometry if and only if $T$ is pure. \epp \bp
For each $h \in \mathcal{H}$, from (\ref{eqn4}) and (\ref{eqn6}) it
follows that
$$
\begin{array}{ccl}
\displaystyle\sum_{n \in \mathbb{Z_+}}\|(I_{X(n)} \otimes \triangle_*(T))
{\widetilde{T}_n}^*h\|^2 & = & \displaystyle\sum_{n \in \mathbb{Z_+}} \langle {\widetilde{T}_n}(I_{X(n)}\otimes \triangle_*(T)^2){\widetilde{T}_n}^*h,h\rangle
\\
& = &  \displaystyle\sum_{n \in \mathbb{Z_+}} \langle {\widetilde{T}_n}(I_{X(n)}\otimes (I_{\mathcal{H}}-{\widetilde{T}_1}{\widetilde{T}_1}^*) ){\widetilde{T}_n}^*h,h\rangle
\\
& = &  \displaystyle\sum_{n \in \mathbb{Z_+}} \langle
\widetilde{T}_n{\widetilde{T}_n}^*-\widetilde{T}_{n+1}{\widetilde{T}_{n+1}}^*h,h\rangle
\\
&=&\langle h,h \rangle- {\lim}_{n \rightarrow \infty}\langle
{\widetilde{T}_n}{\widetilde{T}_n}^*h,h\rangle,
\end{array}
$$
here we also used
${\widetilde{T}_0}{\widetilde{T}_0}^*=I_{\mathcal{H}}$. So $K(T)$ is
a well-defined contraction, and it is an isometry if $T$ is pure.
Now for each $n\in \mathbb{Z_+}$ and $z_n \in X(n)
\bigotimes_{\sigma} \mathcal{D}$ we have
\[
K(T)^*\left(\displaystyle\sum_{n \in \mathbb{Z_+}}
z_n\right)=\displaystyle\sum_{n \in \mathbb{Z_+}}
\widetilde{T}_n(I_{X(n)} \otimes \triangle_*(T))z_n.
\]
Therefore for every $m \in \mathbb{Z_+},$ $\eta \in X(m)$ and $h \in
\mathcal{D}$, (\ref{eqn4}) gives
$$
\begin{array}{ccl}
K(T)^*(S_n(\zeta)\otimes I_{\mathcal{D}})(\eta \otimes h) & = & K(T)^*(p_{n+m}(\zeta \otimes \eta)\otimes h) \\
& = &{\widetilde{T}_{n+m}}(p_{n+m}(\zeta \otimes \eta) \otimes \triangle_*(T)h)  \\
& = & {\widetilde{T}_n}(\zeta \otimes {\widetilde{T}_m}(\eta \otimes \triangle_*(T)h))\\
&=&T_n(\zeta)K(T)^*(\eta \otimes h).
\qedhere\end{array}
$$

\ep

\section{Invariant subspaces of covariant  representations}
In this section we first introduce the notion of invariant subspaces
for completely contractive, covariant representations and then in
Theorem \ref{thm1} we obtain a far reaching generalization of
\cite[Theorem 2.2]{MR3346131}.

Let $T=(T_n)_{n \in \mathbb{Z_+}}$ be a completely contractive,
covariant  representation of a standard subproduct system
$X=(X(n))_{n \in \mathbb{Z_+}}$ over an $A^*$-algebra $\mathcal M$.
A closed subspace $\mathcal{S}$ of $\mathcal{H}$ is called
\textit{invariant} for the covariant  representation $T$ if
$\mathcal{S}$ is invariant for $\sigma(\mathcal{M})$ and if
$\mathcal{S}$ is left invariant by each operator in the set $\{
T_n(\zeta) : \zeta  \in X(n), n \in \mathbb{N} \}$.

\bt\label{thm1} Let $T=(T_n)_{n \in \mathbb{Z_+}}$ be a pure
completely contractive, covariant  representation  of a standard
subproduct system $X=(X(n))_{n \in \mathbb{Z_+}}$ over an
$A^*$-algebra $\mathcal M$, and let $\mathcal S$ be a non-trivial
closed subspace of $\mathcal H$. Then  $\mathcal{S}$ is invariant
for $T$ if and only if there exist a Hilbert space $\mathcal{D},$ a
representation $\pi$ of $\mathcal M$ on $\mathcal D,$ and a partial
isometry $\Pi: \mathcal{F}_X\bigotimes_{\pi}\mathcal{D}\to
\mathcal{H} $ such that $\mathcal S = \mbox{ran~} \Pi$ and
$$
\Pi (S_n(\zeta)\otimes I_{\mathcal{D}})=T_n(\zeta)\Pi \quad \quad
(\zeta\in X(n), ~n\in \mathbb{Z_+}).
$$
\et \bp

Since $\mathcal{S}$ is invariant for $T=(T_n)_{n \in \mathbb{Z_+}}$,
we get a covariant representation $(V_n:={T_n}|_{\mathcal{S}})_{n
\in \mathbb{Z_+}}$ of the  standard subproduct system $X=(X(n))_{n
\in \mathbb{Z_+}}$ on $\mathcal S.$ We denote $V_0$ by $\pi$. Now
for each $ n \in \mathbb{N},~s\in \mathcal S,$ and $\zeta\in X(n)$,
$$
\langle \zeta\otimes s,\zeta\otimes s \rangle=\langle s,\pi(\langle
\zeta,\zeta\rangle) s\rangle=\langle s,\sigma(\langle
\zeta,\zeta\rangle)s\rangle=\langle \zeta\otimes s, \zeta\otimes s\rangle,
$$
yields an embedding $j_n$ from $X(n)\bigotimes_\pi \mathcal{S}$ into
$X(n)\bigotimes_{\sigma} \mathcal{H}.$ Thus for each $ n \in
\mathbb{N},$ $j_nj^*_n$ is an orthogonal projection.

\noindent For each $ n \in \mathbb{N}$, from the definition of the
map ${\widetilde{V}_n}: X(n)\bigotimes_{\pi} \mathcal{S} \to
\mathcal{S}$ it follows that
\[
{\widetilde{V}_n}(\zeta \otimes
s)=V_n(\zeta)s=T_n(\zeta)s=\widetilde{T}_n\circ j_n(\zeta\otimes s),
\]
for all $\zeta\in X(n)$ and $s\in\mathcal S$. It also follows that
\[
\langle\widetilde{V}_n\widetilde{V}^*_n
s,s\rangle=\langle\widetilde{T}_nj_nj^*_n\widetilde{T}^*_n
s,s\rangle\leq \langle\widetilde{T}_n\widetilde{T}^*_n s,s\rangle,
\]
for all $n\in \mathbb N$ and $s\in\mathcal S$. Hence the covariant
representation $V$ is pure as well as completely contractive.

\noindent Since the defect space $\mathcal{D}=\overline{\mbox{\rm
Im} ~\triangle_*(V)}$ of the representation $V$ is reducing for
$\pi$, it follows from Proposition \ref{prop2} that the Poisson
kernel $K(V): \mathcal{S} \to \mathcal{F}_X \bigotimes_\pi
\mathcal{D}$, defined by
$$
K(V)(s)=\displaystyle\sum_{n \in \mathbb{Z_+}}(I_{X(n)}\otimes
\triangle_*(V)){{\widetilde{V}_n}}^*s \quad \quad (s\in\mathcal S),
$$
is an isometry and
\[
K(V)^*(S_n(\zeta) \otimes I_{\mathcal{D}})=V_n(\zeta)K(V)^*,
\]
for all $n \in \mathbb{Z_+},$ and $\zeta\in X(n)$. Let $i_{\mathcal
S}:\mathcal S \to \mathcal{H}$ be the inclusion map. Clearly
$i_{\mathcal S}$ is an isometry and
\[
i_{\mathcal S}T_n(\cdot)|_{\mathcal{S}}=T_n(\cdot) i_{\mathcal S}.
\]
Therefore we get a map $\Pi:\mathcal{F}_X \bigotimes_{\pi}
\mathcal{D} \to \mathcal{H}$ defined by $\Pi:=i_{\mathcal S}K(V)^*
$. Then $$\Pi \Pi^*=i_{\mathcal S}K(V)^*(i_{\mathcal
S}K(V)^*)^*=i_{\mathcal S}i^*_{\mathcal S}=P_{\mathcal{S}},$$ the
projection on $\mathcal S$. Hence $\Pi$ is a partial isometry and
the range of $ \Pi$ is $\mathcal{S}$. From $i_{\mathcal
S}V_n=i_{\mathcal S}T_n|_{\mathcal{S}}=T_n i_{\mathcal S}$ and the
intertwining property of the Poisson kernel we deduce that
\[
\Pi(S_n(\zeta) \otimes I_{\mathcal{D}})=i_{\mathcal
S}K(V)^*(S_n(\zeta) \otimes I_{\mathcal{D}})=i_{\mathcal
S}V_n(\zeta)K(V)^*=T_n(\zeta) \Pi.
\]
Conversely, suppose that there exists a partial isometry $\Pi:
\mathcal{F}_X\bigotimes_{\pi}\mathcal{D}\to \mathcal{H} .$ Then $ran
\Pi$ is a closed subspace of $\mathcal{H}$ and the intertwining
relation for $\Pi$ implies that $ran \Pi$ is a $T=(T_n)_{n \in
\mathbb{Z_+}}$ invariant subspace of $\mathcal{H}$.
\ep

\begin{corollary}
Let $T=(T_n)_{n \in \mathbb{Z_+}}$ be a pure completely contractive,
covariant  representation  of a standard subproduct system
$X=(X(n))_{n \in \mathbb{Z_+}}$ over an $A^*$-algebra $\mathcal M$,
and $\mathcal S$ be a non-trivial closed subspace of $\mathcal
H.$ Then  $\mathcal{S}$ is invariant for $T$ if and only if there
exist a Hilbert space $\mathcal{D},$ a representation $\pi$ of
$\mathcal M$ on $\mathcal D,$ and a bounded linear operator $\Pi:
\mathcal{F}_X\bigotimes_{\pi}\mathcal{D}\to \mathcal{H} $ such that
$P_{\mathcal S}=\Pi\Pi^*,$ and
\[
\Pi (S_n(\zeta)\otimes I_{\mathcal{D}})=T_n(\zeta)\Pi \quad \quad
(\zeta\in X(n), ~n\in \mathbb{Z_+}).
\]
\end{corollary}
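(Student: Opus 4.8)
The plan is to deduce the corollary directly from Theorem~\ref{thm1}, the only genuinely new ingredient being the observation that the hypothesis $P_{\mathcal S}=\Pi\Pi^*$ already forces the bounded operator $\Pi$ to be a partial isometry whose range is exactly $\mathcal S$. In other words, the corollary is not really a weakening of the theorem: replacing ``partial isometry with $\mathcal S=\operatorname{ran}\Pi$'' by ``bounded operator with $P_{\mathcal S}=\Pi\Pi^*$'' describes the same class of operators.

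For the forward implication I would simply invoke Theorem~\ref{thm1}. If $\mathcal S$ is invariant for $T$, that theorem produces a Hilbert space $\mathcal D$, a representation $\pi$ of $\mathcal M$ on $\mathcal D$, and a partial isometry $\Pi\colon \mathcal F_X\bigotimes_{\pi}\mathcal D\to\mathcal H$ with $\mathcal S=\operatorname{ran}\Pi$ and the stated intertwining relation. A partial isometry is in particular a bounded linear operator, and the identity $\Pi\Pi^*=P_{\mathcal S}$ was already verified inside the proof of Theorem~\ref{thm1}, where $\Pi=i_{\mathcal S}K(V)^*$ and, using that $K(V)$ is an isometry, $\Pi\Pi^*=i_{\mathcal S}i_{\mathcal S}^*=P_{\mathcal S}$. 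Hence the very same $\Pi$ satisfies all the requirements of the corollary.

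For the converse, suppose a bounded $\Pi$ is given with $P_{\mathcal S}=\Pi\Pi^*$ and $\Pi(S_n(\zeta)\otimes I_{\mathcal D})=T_n(\zeta)\Pi$. The first step is the standard equivalence: $\Pi\Pi^*$ being an orthogonal projection is equivalent to $\Pi$ being a partial isometry, so in particular $\Pi=\Pi\Pi^*\Pi$, whence $\operatorname{ran}\Pi=\operatorname{ran}\Pi\Pi^*=\operatorname{ran}P_{\mathcal S}=\mathcal S$. Thus $\mathcal S=\operatorname{ran}\Pi$ is a closed subspace, and invariance can be read off directly from the intertwining relation: for every $n$ and every $\zeta\in X(n)$ one has $T_n(\zeta)(\operatorname{ran}\Pi)=\Pi(S_n(\zeta)\otimes I_{\mathcal D})(\mathcal F_X\bigotimes_{\pi}\mathcal D)\subseteq\operatorname{ran}\Pi$, and taking $n=0$ (so that $S_0=\phi_\infty$ and $T_0=\sigma$) gives $\sigma(\mathcal M)(\operatorname{ran}\Pi)\subseteq\operatorname{ran}\Pi$. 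Therefore $\mathcal S$ is invariant for $\sigma(\mathcal M)$ and for each $T_n(\zeta)$, that is, invariant for $T$.

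The only point requiring any care, and what I would flag as the main (though modest) obstacle, is the equivalence ``$\Pi$ partial isometry $\iff\Pi\Pi^*$ an orthogonal projection'' together with the resulting identification $\operatorname{ran}\Pi=\mathcal S$; everything else is a bookkeeping restatement of Theorem~\ref{thm1}, with no need to re-run the Poisson-kernel construction. I would accordingly present the argument as a brief remark reducing the corollary to the theorem.
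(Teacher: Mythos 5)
Your proposal is correct and is essentially the paper's own route: the paper states this corollary without a separate proof, treating it as an immediate restatement of Theorem \ref{thm1} via exactly the equivalence you make explicit, namely that a bounded $\Pi$ with $\Pi\Pi^*=P_{\mathcal S}$ is automatically a partial isometry with $\operatorname{ran}\Pi=\mathcal S$ (and conversely, the $\Pi=i_{\mathcal S}K(V)^*$ built in the theorem's proof already satisfies $\Pi\Pi^*=P_{\mathcal S}$). Your verification of the converse, including the $n=0$ case giving $\sigma(\mathcal M)$-invariance, fills in the details the paper leaves tacit, and no further argument is needed.
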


\begin{definition}
Let $X=(X(n))_{n \in \mathbb{Z_+}}$ be a standard subproduct system
over an $A^*$-algebra $\mathcal M$. Assume $\psi$ and $\pi$ to be
representations of $\mathcal M$ on Hilbert spaces $\mathcal E$ and
$\mathcal E',$ respectively.  A bounded operator $\Pi:
\mathcal{F}_X\bigotimes_{\pi}\mathcal{E'}\to \mathcal
F_X\bigotimes_\psi\mathcal E $ is called {\rm multi-analytic} if it
satisfies the following condition
$$\Pi (S_n(\zeta)\otimes I_{\mathcal{E'}})=(S_n(\zeta)\otimes I_{\mathcal E})\Pi~\mbox~{whenever}~\zeta\in X(n), ~n\in \mathbb{Z_+}.$$
Further we call it {\rm inner} if it is a partial isometry.
\end{definition}

As an application, we have the following Beurling-Lax-Halmos type
theorem (cf. \cite[Theorem 3.2]{MR2643314}) which extends
\cite[Theorem 2.4]{Po95} and \cite[Corollary 4.5]{J2}:

\begin{theorem}\label{Beur}
Assume $X=(X(n))_{n \in \mathbb{Z_+}}$ to be a standard subproduct
system over an $A^*$-algebra $\mathcal M$ and assume $\psi$ to be a
representation of $\mathcal M$ on a Hilbert space $\mathcal E$. Let
$\mathcal S$ be a non-trivial closed subspace of the Hilbert space
$\mathcal F_X\bigotimes_\psi\mathcal E.$ Then $\mathcal{S}$ is
invariant for $S\otimes I_{\mathcal E}$ if and only if there exist a
Hilbert space $\mathcal{E'},$ a representation $\pi$ of $\mathcal M$
on $\mathcal E',$ and an inner multi-analytic operator $\Pi:
\mathcal{F}_X\bigotimes_{\pi}\mathcal{E'}\to \mathcal
F_X\bigotimes_\psi\mathcal E $ such that $\mathcal S$ is the range
of $\Pi$.
\end{theorem}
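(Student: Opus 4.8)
The plan is to recognize Theorem \ref{Beur} as an essentially immediate specialization of Theorem \ref{thm1}, with the ambient representation $T$ taken to be the induced representation $S \otimes I_{\mathcal E}$ and the ambient Hilbert space $\mathcal H$ taken to be $\mathcal F_X \bigotimes_\psi \mathcal E$. Before applying Theorem \ref{thm1}, I would first verify that $S \otimes I_{\mathcal E}$ satisfies its hypotheses, namely that it is a pure, completely contractive, covariant representation of $X$ on $\mathcal F_X \bigotimes_\psi \mathcal E$. Covariance and complete contractivity follow from Example \ref{eg-2}, where the $X$-shift $S^X$ is observed to be a completely contractive covariant representation of $\mathcal F_X$, together with the elementary fact that ampliation by $I_{\mathcal E}$ preserves both properties. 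Purity is exactly the content of the second comment following Example \ref{eg-3}, which asserts that $S \otimes I_{\mathcal E}$ is pure for any representation $\psi$.

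With the hypotheses verified, I would apply Theorem \ref{thm1} directly to $T := S \otimes I_{\mathcal E}$ and the given non-trivial closed subspace $\mathcal S \subseteq \mathcal F_X \bigotimes_\psi \mathcal E$. The theorem then provides a Hilbert space $\mathcal E'$ (playing the role of the defect space $\mathcal D$), a representation $\pi$ of $\mathcal M$ on $\mathcal E'$, and a partial isometry $\Pi : \mathcal F_X \bigotimes_\pi \mathcal E' \to \mathcal F_X \bigotimes_\psi \mathcal E$ with $\mathcal S = \operatorname{ran} \Pi$ satisfying
\[
\Pi(S_n(\zeta) \otimes I_{\mathcal E'}) = T_n(\zeta)\Pi = (S_n(\zeta) \otimes I_{\mathcal E})\Pi \qquad (\zeta \in X(n),~ n \in \mathbb{Z}_+).
\]
Here the identification $T_n(\zeta) = S_n(\zeta) \otimes I_{\mathcal E}$ is precisely what lets the intertwining relation of Theorem \ref{thm1} collapse into the desired shape.

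The last step is to match this conclusion to the definition of an inner multi-analytic operator given above: the displayed intertwining identity is verbatim the defining relation for $\Pi$ to be multi-analytic, and the fact that $\Pi$ is a partial isometry is exactly the inner condition. For the converse, given an inner multi-analytic $\Pi$, its range is closed since $\Pi$ is a partial isometry, and the multi-analytic relation shows that $\operatorname{ran}\Pi$ is left invariant by each $S_n(\zeta)\otimes I_{\mathcal E}$ as well as by $\sigma(\mathcal M)$, which is precisely the converse half of Theorem \ref{thm1}. I do not anticipate any genuine obstacle, since the entire content is a repackaging of Theorem \ref{thm1}; the only points demanding care are the confirmation of purity and complete contractivity of $S \otimes I_{\mathcal E}$ and ensuring that the identification of the intertwining relation with the multi-analyticity condition is literal rather than merely formal.
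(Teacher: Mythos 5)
Your proposal is correct and follows essentially the same route as the paper: the paper's proof of Theorem \ref{Beur} likewise applies Theorem \ref{thm1} directly to the induced representation $S\otimes I_{\mathcal E}$ (whose purity is exactly the second comment preceding the Poisson kernel discussion), identifies the resulting intertwining partial isometry with an inner multi-analytic operator, and disposes of the converse via the intertwining relation. Your explicit verification of the hypotheses of Theorem \ref{thm1} is slightly more careful than the paper, which invokes them implicitly, but the argument is the same.
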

\begin{proof}
Let $\mathcal{S}$ be an invariant subspace for $S\otimes I_{\mathcal
E}$. By Theorem \ref{thm1} we know that there exist a Hilbert space
$\mathcal{E'},$ a representation $\pi$ of $\mathcal M$ on $\mathcal
E',$ and a partial isometry $\Pi:
\mathcal{F}_X\bigotimes_{\pi}\mathcal{E'}\to \mathcal
F_X\bigotimes_\psi\mathcal E$ such that $\mathcal S = \mbox{ran~}
\Pi$ and
\[
\Pi (S_n(\zeta)\otimes I_{\mathcal{E'}})=(S_n(\zeta)\otimes
I_{\mathcal E})\Pi \quad \quad (\zeta\in X(n), ~n\in \mathbb{Z_+}).
\]
For the reverse direction, if we start with a partial isometry $\Pi:
\mathcal{F}_X\bigotimes_{\pi}\mathcal{E'}\to \mathcal
F_X\bigotimes_\psi\mathcal E$, then $ran \Pi$ is a closed subspace
of $\mathcal F_X\bigotimes_\psi\mathcal E$ and the intertwining
relation for $\Pi$ implies that $ran \Pi=\mathcal{S}$ is invariant
for $S\otimes I_{\mathcal E}.$
\end{proof}

For Beurling type classification in the tensor algebras setting see
also Muhly and Solel \cite[Theorem 4.7]{MS99}.

\section{Curvature}

The notion of a curvature for commuting tuples of row contractions
was introduced by Arveson \cite{A2000}.  This numerical invariant is
an analogue of the Gauss-Bonnet-Chern formula from Riemannian
geometry, and closely related to rank of Hilbert modules over
polynomial algebras. It has since been further analyzed by Popescu
\cite{MR1822685} (see also \cite{GP15} for recent results on a general class), Kribs \cite{DK} in the setting of noncommuting
tuples of operatos, and by Muhly and Solel \cite{MR2005882} in the
setting of completely positive maps on $C^*$ algebras of bounded
linear operators.

The purpose of this section is to study curvature for a more general
framework, namely, for completely contractive, covariant
representations of subproduct systems.

We begin by recalling the definition of left dimension
\cite{MR1473221} for a $W^*$-correspondences $E$ over a semifinite
factor $\mathcal{M}$ (see Muhly and Solel, Definition 2.5,
\cite{MR2005882}).

Let $\mathcal M$ be a semifinite factor and $\tau$ be a faithful
normal semifinite trace, and let $L^2(\mathcal M)$ be the GNS
construction for $\tau$. Note that for each $a\in\mathcal M$ there
exists a left multiplication operator, denoted by $\lambda (a)$, and
a right multiplication operator, denoted by $\rho (a)$, on
$L^2(\mathcal M)$. Each unital, normal, $*$-representation
$\sigma:\mathcal M\to B(\mathcal H)$ defines a {\it left $\mathcal
M$-module} $\mathcal H$. This yields an $\mathcal M$-linear isometry
$V:\mathcal H\to L^2(\mathcal M)\bigotimes l_2$. Here $\mathcal
M$-linear means
\[
V\sigma(a)=(\lambda(a)\otimes I_{l_2})V \quad \quad (a\in\mathcal M).
\]
Moreover
\[
V\sigma(\mathcal M)'V^*=p(\lambda(\mathcal M)\otimes
I_{l_2})'p\subseteq (\lambda(\mathcal M)\otimes I_{l_2})',
\]
where
\[
p:=VV^*\in (\lambda(\mathcal M)\otimes I_{l_2})',
\]
is a projection. One can observe that $(\lambda(\mathcal M)\otimes
I_{l_2})'$ equals the semifinite factor $\rho(\mathcal M)\bigotimes
B(l_2)$ whose elements can be written as matrices of the form
$(\rho(a_{ij}))$. For each positive element $x\in \sigma(\mathcal
M)'$, we express $VxV^*$ in the form $(\rho(a_{ij}))$, and define
$$tr_{\sigma(\mathcal M)'} (x):=\sum \tau (a_{ii}).$$ Note that
$tr_{\sigma(\mathcal M)'}$ is a faithful normal semifinite trace on
$\sigma(\mathcal M)'$. The {\it left dimension} of $\mathcal H$ is
defined by $$dim_{l}(\mathcal H):=tr_{\sigma(\mathcal M)'}(p).$$ For
each $W^*$-correspondence $E,$ the Hilbert space
$E\bigotimes_{\sigma} L^2(\mathcal M)$ has a natural left $\mathcal
M$-module structure. The left dimension of $E\bigotimes_{\sigma}
L^2(\mathcal M)$ will be denoted by $dim_l(E).$

Now let $X=(X(n))_{n \in \mathbb{Z_+}}$ be a standard subproduct
system of $W^*$-correspondences over a semifinite factor
$\mathcal{M}$. Let $T=(T_n)_{n \in \mathbb{Z_+}}$ be a completely
contractive, covariant representation of $X$ on a Hilbert space
$\mathcal{H}$. Define a contractive, normal and completely positive
map $\Theta_{T}:\sigma(\mathcal{M})'\to \sigma(\mathcal{M})'$ by
$$
\Theta_T(a):=\widetilde{T}_1(I_{E}\otimes a)\widetilde{T}^*_1,
$$
for all $a\in\sigma(\mathcal{M})'$. It follows from (\ref{eqn6}),
(\ref{eqn5}) and (\ref{eqn4}) that
\begin{align*}
\Theta^2_T(a)&=\Theta_T(\Theta_T(a))=\widetilde{T}_1(I_{E}\otimes (\widetilde{T}_1(I_{E}\otimes a)\widetilde{T}^*_1))\widetilde{T}^*_1
\\
&=\widetilde{T}_1(I_{E}\otimes \widetilde{T}_1)(I_{E^{\otimes 2}}\otimes a)(I_{E}\otimes \widetilde{T}^*_1)\widetilde{T}^*_1
\\
& = \widetilde{T}_2(p_2\otimes I_{\mathcal H})(I_{E^{\otimes
2}}\otimes a)(p^*_2\otimes I_{\mathcal H})\widetilde{T}^*_2
\\&=\widetilde{T}_2(I_{X(2)}\otimes a)\widetilde{T}^*_2,
\end{align*}
for all $a\in\sigma(\mathcal{M})'$. Inductively, we get
\begin{align*}
\Theta^n_T(a)&=\Theta_T(\Theta^{n-1}_T(a))=\widetilde{T}_1(I_{E}\otimes (\widetilde{T}_{n-1}(I_{X(n-1)}\otimes a)\widetilde{T}^*_{n-1}))\widetilde{T}^*_1
\\&=\widetilde{T}_1(I_{E}\otimes \widetilde{T}_{n-1})(I_{E}\otimes I_{X(n-1)}\otimes a)(I_{E}\otimes \widetilde{T}^*_{n-1})\widetilde{T}^*_1
\\&= \widetilde{T}_n(p_n\otimes I_{\mathcal H})(I_{E}\otimes I_{X(n-1)}\otimes a)(p^*_n\otimes I_{\mathcal H})\widetilde{T}^*_n
\\&=\widetilde{T}_n(I_{X(n)}\otimes a)\widetilde{T}^*_n,
\end{align*}
for all $a\in\sigma(\mathcal{M})'$ and $n\geq 2$.

The following is a reformulation of Muhly and Solel's result in our
setting \cite[Proposition 2.12]{MR2005882}:

\bpp\label{prop6.1} Let $X=(X(n))_{n \in \mathbb{Z_+}}$ be a
standard subproduct system of left-finite $W^*$-correspondences over a finite
factor $\mathcal{M}$. If $T=(T_n)_{n \in
\mathbb{Z_+}}$ is a completely contractive, covariant representation
of $X$ on $\mathcal{H}$ then
$$tr_{\sigma(\mathcal{M})'}(\Theta^n_{T}(x))\leq \|\widetilde{T}_n\|^2
\: dim_l(X(n))tr_{\sigma(\mathcal{M})'}(x),$$ for all $x \in
\sigma(\mathcal{M})_+'$. \epp

Let $X=(X(n))_{n \in {\mathbb{Z_+}}}$ be a standard subproduct
system of left-finite $W^*$-correspondences over a semifinite factor
$\mathcal{M}$. The \textit{curvature} of a completely contractive,
covariant representation $T=(T_n)_{n \in \mathbb{Z_+}}$ of $X$ on a
Hilbert space $\mathcal{H}$ is defined by
\begin{align}\label{eqn_curv}
Curv(T)=\lim_{k \to
\infty}\frac{tr_{\sigma(\mathcal{M})'}(I-\Theta_T^k(I))}{\sum_{j=0}^{k-1} dim_l(X(j))},
\end{align}
if the limit exists.

The following result is well known (cf. Popescu
\cite[p.280]{MR1822685}).

\bl \label{lemp} Let $\{a_j\}_{j=0}^{\infty}$ and
$\{b_j\}_{j=0}^{\infty}$ be two real sequences, and let $a_j \geq 0$
and $b_j>0$ for all $j \geq 0$. Consider the partial sums
$A_k:=\sum_{j=0}^{k-1}a_j$ and $B_k:=\sum_{j=0}^{k-1}b_j$, and
suppose that $B_k \to \infty$ as $k \to \infty$. Then
\[
\lim_{k \to \infty}\frac{A_k}{B_k}=L,
\]
whenever $L:=\lim_{j \to \infty}\frac{a_j}{b_j}$ exists.
\el

Coming back to our definition of curvatures, we note that
\[
\begin{split}
tr_{\sigma(\mathcal{M})'}(I-\Theta_T^k(I)) & = \displaystyle
\sum^{k-1}_{j=0}
tr_{\sigma(\mathcal{M})'}\Theta_T^j(I-\Theta_T(I))
\\
& = \displaystyle \sum^{k-1}_{j=0}
tr_{\sigma(\mathcal{M})'}\Theta_T^j(\triangle_*(T)^2).
\end{split}
\]
From this, and our previous lemma, it follows that $Curv(T)$ is well
defined whenever the following two conditions are satisfied:
\begin{itemize}
\item [(1)] $
\lim_{j \to
\infty}\frac{tr_{\sigma(\mathcal{M})'}\Theta_T^j(\triangle_*(T)^2)}{
dim_l(X(j))}$ exists, and
\item [(2)] $
\lim_{k \to \infty}  \sum_{j=0}^{k-1}dim_l(X(j))=\infty$.
\end{itemize}

The next result concerns the existence of curvatures in the setting
of completely contractive, covariant representations on product
systems (cf. \cite[Example 1.2]{MR2608451}). This is an analogue of
the result by Muhly and Solel \cite[Theorem 3.3]{MR2005882}. The
curvature for completely contractive, covariant representation of
the standard subproduct system (see Example \ref{eg-3}) will be
discussed at the end of this section.

\bt Let $X=(X(n))_{n \in \mathbb{Z_+}}$ be a product system of
$W^*$-correspondences over a finite factor $\mathcal{M}$, that is,
$X(n)=E^{\otimes n}$ where $E:=X(1)$ is a left-finite
$W^*$-correspondence. Set $d: =dim_l(E)$. If $T=(T_n)_{n \in
\mathbb{Z_+}}$ is a completely contractive, covariant representation
of $X$ on $\mathcal{H}$, then the following holds:
\begin{itemize}
\item [(1)] The limit in the definition of $Curv(T)$ exists, either
as a positive number or $+ \infty$.
\item [(2)] $Curv(T)=\infty$ if and only if $tr_{\sigma(\mathcal{M})'}(I-\Theta_T(I))=\infty$.
\item [(3)] If $tr_{\sigma(\mathcal{M})'}(I-\Theta_T(I)) < \infty$, then $Curv(T) < \infty$. Moreover, in this case we have the following:
\begin{enumerate}
\item [(3a)] for $d\geq1$ we have
\[
Curv(T)=\lim_{k \to \infty}
\frac{tr_{\sigma(\mathcal{M})'}(\Theta_{T}^k(I)-\Theta_{T}^{k+1}(I))}{d^k},
\]
in particular if $d>1$, then we further get
\[
Curv(T)=(d-1)\lim_{k \to
\infty}\frac{tr_{\sigma(\mathcal{M})'}(I-\Theta_{T}^k(I))}{d^k};
\]
\item [(3b)] for $d<1$, $\lim_{k \to \infty}tr_{\sigma(\mathcal{M})'}(I-\Theta_{T}^k(I)) < \infty$, and
\[
Curv(T)=(1-d)\left(\lim_{k \to
\infty}tr_{\sigma(\mathcal{M})'}(I-\Theta_{T}^k(I))\right).
\]
\end{enumerate}
\end{itemize}
\et \bp From \cite[Theorem
3.3]{MR2005882} it follows that $dim_l(X(j))=d^j.$ Let $ a_k=tr_{\sigma(\mathcal{M})'}(\Theta_{T}^k(I)-\Theta_{T}^{k+1}(I))$ for $k\geq
0$. Then Proposition \ref{prop6.1} yields
$$\begin{array}{cll}
a_{k+1} & = &tr_{\sigma(\mathcal{M})'}(\Theta_{T}( \Theta_{T}^k(I)-\Theta_{T}^{k+1}(I) ))\\
& \leq & \|\widetilde{T}_1\|^2 dim_l(E) tr_{\sigma(\mathcal{M})'}( \Theta_{T}^k(I)-\Theta_{T}^{k+1}(I) )\\
& \leq& da_k,
\end{array}
$$
for all $k\geq 0$. If $a_0=\infty$, then the fact that
$\{\widetilde{T}_n\widetilde{T}_n^*\}_{n \in \mathbb{Z_+}}$ is a
decreasing sequence of positive contractions implies that
\[
tr_{\sigma(\mathcal{M})'}(I-\Theta_T^k(I))=\infty \quad \quad (k
\geq 0).
\]
If $a_0<\infty,$ then $\{\frac{a_j}{d^j}\}^{\infty}_{j=0}$ is a
non-increasing sequence of non-negative numbers. Then $0\leq L \leq
a_0$ where $L := \lim \frac{a_j}{d^j}$.

\noindent Let $d\geq1$. Since
\[
tr_{\sigma(\mathcal{M})'}(I-\Theta_{T}^k(I))=\sum_{j=0}^{k-1}a_j,
\]
by Lemma \ref{lemp} (for $b_j=d^j$) the limit defining $Curv(T)$
exists and $Curv(T) = L$.

\noindent Now let  $d>1$. Then
$\sum_{j=0}^{k-1}d^j=\frac{d^k-1}{d-1}$ and $\lim_{k \to
\infty}\frac{d^k-1}{d^k}=1$ yields
$$
\begin{array}{cll}
Curv(T) & = & \lim_{k \to \infty }\frac{tr_{\sigma(\mathcal{M})'}(I-\Theta_{T}^k(I)) }{\frac{d^k-1}{d-1}}
\\
& = & (d-1)\lim_{k \to
\infty}\frac{tr_{\sigma(\mathcal{M})'}(I-\Theta_{T}^k(I))}{d^k-1}
\lim_{k \to \infty}\frac{d^k-1}{d^k}
\\
& = & (d-1)\lim_{k \to
\infty}\frac{tr_{\sigma(\mathcal{M})'}(I-\Theta_{T}^k(I))}{d^k}.
\end{array}
$$
This proves statement $(3a)$.

\noindent Finally, let $d<1$ so that $\sum_{j=0}^{\infty} d^j =
1/(1-d)$. Since $a_j \leq d^j a_0$ for all $j\geq 0$, $\lim_{k \to
\infty}tr_{\sigma(\mathcal{M})'}(I-\Theta_{T}^k(I))$ exists and is
finite. This completes the proof of $(3b)$. The proof of statements
$(1)$ and $(2)$ follows by noting that whenever $a_0$ is finite, the
limit defining $Curv(T)$ exists and is finite.  \ep

Recall that
\[
\Theta_T(x)=\widetilde{T_1}(I_{E} \otimes x)\widetilde{T_1}^*,
\]
for all $x\in\sigma(\mathcal{M})'$, and
\[
Q=\lim_{n \to \infty}\widetilde{T}_n\widetilde{T}_n^*=\lim_{n \to
\infty}\Theta_{T}^n(I_{\mathcal{H}}).
\]
Using the intertwining property of the Poisson kernel
\[
K(T)^*(S_{n}(\zeta)\otimes I_{\mathcal{D}})=T_n(\zeta)K(T)^*,
\]
we have
$$
\begin{array}{cll}
\widetilde{T}_n(I_{X(n)} \otimes K(T)^*)(\zeta \otimes k) & = & \widetilde{T}_n(\zeta \otimes K(T)^*k) \\
& = & T_n(\zeta)K(T)^* k \\
& = & K(T)^*(S_n(\zeta) \otimes I_{\mathcal{D}})k,
\end{array}
$$
for all $\zeta \in X(n), k\in \mathcal{F}_X \bigotimes_{\sigma}
\mathcal{D}, n \in \mathbb{Z_+}$. Then
$$
\widetilde{T}_n(I_{X(n)} \otimes
K(T)^*)=K(T)^*\widetilde{(S_n(\cdot) \otimes I_{\mathcal{D}})},
$$
and hence $\Theta_T^{n}(Q)=Q$ and
\[
K(T)^*K(T)=I_{\mathcal{H}}-Q,
\]
yields
\[
\begin{split}
K(T)^* & (I_{\mathcal{F}_X \bigotimes_{\sigma}
\mathcal{D}}-\Theta_{S \otimes I_{\mathcal{D}}}^n (I_{\mathcal{F}_X
\bigotimes_{\sigma} \mathcal{D}}))K(T)
\\
& = K(T)^*K(T)- K(T)^*(\widetilde{S_n(\cdot) \otimes
I_{\mathcal{D}}})(\widetilde{S_n(\cdot) \otimes
I_{\mathcal{D}}})^*K(T)
\\
& = K(T)^*K(T)-\widetilde{T}_n(I_{X(n)} \otimes K(T)^*)(\widetilde{S_n(\cdot) \otimes I_{\mathcal{D}}})^* K(T)
\\
& = I_{\mathcal{H}}-Q-\widetilde{T}_{n}(I_{X(n)} \otimes K(T)^*)(I_{X(n)} \otimes K(T)) \widetilde{T}_{n}^*
\\
& = I_{\mathcal{H}}-Q-\widetilde{T}_{n}(I_{X(n)} \otimes K(T)^*K(T)) \widetilde{T}_{n}^*
\\
& =I_{\mathcal{H}}-Q- \widetilde{T}_{n}(I_{X(n)} \otimes (I_\mathcal{H}-Q)) \widetilde{T}_{n}^*
\\
& = I_{\mathcal{H}}-Q-\Theta_T^{n}(I_\mathcal{H}-Q)
\\
& = I_{\mathcal{H}}-\Theta_T^{n}(I_\mathcal{H}).
\end{split}
\]

Therefore one can compute the curvature, in terms of Poisson kernel,
in the following sense:

\begin{proposition}\label{proposition1}
Let $X=(X(n))_{n \in {\mathbb{Z_+}}}$ be a standard subproduct
system of left-finite $W^*$-correspondences over a finite factor $\mathcal{M}$. If $T=(T_n)_{n \in \mathbb{Z_+}}$ is a completely
contractive, covariant representation of $X$ on a Hilbert space
$\mathcal{H}$, then the curvature of $T$ is given by
\begin{align}\label{eqn2} &~~~~~~~~Curv(T)\\&\nonumber =\lim_{k \to
\infty}\frac{tr_{\sigma(\mathcal{M})'}(K(T)^*(I_{\mathcal{F}_X
\bigotimes_{\sigma} \mathcal{D}}-\Theta_{S \otimes
I_{\mathcal{D}}}^k(I_{\mathcal{F}_X \bigotimes_{\sigma}
\mathcal{D}}))K(T))}{\sum_{j=0}^{k-1}dim_l(X(j))},
\end{align}
if the limit exists.

\end{proposition}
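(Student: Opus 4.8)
The plan is to read the Proposition directly off the operator identity that has just been assembled above its statement, namely
\[
K(T)^*\big(I_{\mathcal{F}_X \bigotimes_{\sigma} \mathcal{D}}-\Theta_{S \otimes I_{\mathcal{D}}}^{k}(I_{\mathcal{F}_X \bigotimes_{\sigma} \mathcal{D}})\big)K(T)=I_{\mathcal{H}}-\Theta_T^{k}(I_{\mathcal{H}}),
\]
valid for every $k\in\mathbb{Z_+}$. Granting this identity, the Proposition is obtained by applying the trace $tr_{\sigma(\mathcal{M})'}$ to both sides and comparing the result with the defining formula \eqref{eqn_curv} for $Curv(T)$; all the analytic content has already been spent in deriving the identity, and what remains is a verification that taking traces is legitimate.

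Concretely, I would first record that both sides are positive elements of $\sigma(\mathcal{M})'$. For the right-hand side this is clear: the completely positive map $\Theta_T$ carries $\sigma(\mathcal{M})'$ into itself, so $\Theta_T^{k}(I_{\mathcal{H}})=\widetilde{T}_k\widetilde{T}_k^{*}$ lies in $\sigma(\mathcal{M})'$, and since $\{\widetilde{T}_n\widetilde{T}_n^{*}\}_{n\in\mathbb{Z_+}}$ is a decreasing sequence of positive contractions we have $0\leq I_{\mathcal{H}}-\Theta_T^{k}(I_{\mathcal{H}})\leq I_{\mathcal{H}}$. Applying $tr_{\sigma(\mathcal{M})'}$ to the identity then gives, for each fixed $k$,
\[
tr_{\sigma(\mathcal{M})'}\big(K(T)^*(I_{\mathcal{F}_X \bigotimes_{\sigma} \mathcal{D}}-\Theta_{S \otimes I_{\mathcal{D}}}^{k}(I_{\mathcal{F}_X \bigotimes_{\sigma} \mathcal{D}}))K(T)\big)=tr_{\sigma(\mathcal{M})'}(I_{\mathcal{H}}-\Theta_T^{k}(I_{\mathcal{H}})),
\]
an equality in $[0,+\infty]$ simply because the two operators coincide. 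Dividing through by $\sum_{j=0}^{k-1}dim_l(X(j))$ and letting $k\to\infty$, the right-hand quotient is by \eqref{eqn_curv} precisely $Curv(T)$; hence the left-hand limit exists if and only if $Curv(T)$ does, and when it does the two agree, which is exactly \eqref{eqn2}.

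I do not expect any serious obstacle here, because the hard step — transporting the powers of $\Theta_T$ on $\mathcal{H}$ to the powers of $\Theta_{S\otimes I_{\mathcal{D}}}$ on $\mathcal{F}_X\bigotimes_{\sigma}\mathcal{D}$ compressed by the Poisson kernel, using the intertwining relation of Proposition \ref{prop2} together with $K(T)^*K(T)=I_{\mathcal{H}}-Q$ and $\Theta_T^{n}(Q)=Q$ — is already carried out in the display preceding the statement. The one point deserving genuine care is the well-definedness of $tr_{\sigma(\mathcal{M})'}$ on these elements: this is where the standing hypotheses that $\mathcal{M}$ is a finite factor and that each $X(n)$ is left-finite enter, guaranteeing that the trace takes values in $[0,+\infty]$ on positive operators and that the denominators $\sum_{j=0}^{k-1}dim_l(X(j))$ are finite, so that the quotients in \eqref{eqn2} are meaningful for every $k$.
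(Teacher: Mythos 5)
Your proposal is correct and follows essentially the same route as the paper: the paper establishes the operator identity $K(T)^*\bigl(I_{\mathcal{F}_X \bigotimes_{\sigma} \mathcal{D}}-\Theta_{S \otimes I_{\mathcal{D}}}^{k}(I_{\mathcal{F}_X \bigotimes_{\sigma} \mathcal{D}})\bigr)K(T)=I_{\mathcal{H}}-\Theta_T^{k}(I_{\mathcal{H}})$ in the display immediately preceding the Proposition and treats the statement as the direct consequence of applying $tr_{\sigma(\mathcal{M})'}$, dividing by $\sum_{j=0}^{k-1}dim_l(X(j))$, and comparing with the defining formula (\ref{eqn_curv}). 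Your additional remarks on positivity and on where the finite-factor and left-finiteness hypotheses enter are sound and consistent with the paper's framework.
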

The following theorem generalizes \cite[Theorem 2.1]{MR1822685}.

\begin{theorem}\label{theorem1}
Let $T=(T_n)_{n \in \mathbb{Z_+}}$ to be a completely contractive,
covariant  representation  of a standard subproduct system
$X=(X(n))_{n \in \mathbb{Z_+}}$  of  $A^*$-correspondences over an
$A^*$-algebra $\mathcal{M}$. Then there exist a Hilbert space
$\mathcal E,$ a representation $\psi$ of $\mathcal M$ on $\mathcal
E,$ and an inner multi-analytic operator $\Pi:\mathcal
F_X\bigotimes_\psi\mathcal E
\to\mathcal{F}_X\bigotimes_{\sigma}\mathcal{D} $ such that
$$
I_{\mathcal{F}_X\bigotimes_{\sigma}\mathcal{D}}-K(T)K(T)^*=\Pi\Pi^*.
$$
\end{theorem}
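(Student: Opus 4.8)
The plan is to recognize $I_{\mathcal F_X\bigotimes_\sigma\mathcal D}-K(T)K(T)^*$ as the orthogonal projection onto an $(S\otimes I_{\mathcal D})$-invariant subspace of $\mathcal F_X\bigotimes_\sigma\mathcal D$, and then to invoke the Beurling--Lax--Halmos theorem (Theorem \ref{Beur}) to manufacture the inner multi-analytic operator $\Pi$. The whole argument is thus a reduction to the invariant subspace machinery already developed in Section 3, with the Poisson kernel $K(T)$ supplying the relevant subspace.

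First I would exploit the intertwining property of the Poisson kernel recorded in Proposition \ref{prop2}, namely $K(T)^*(S_n(\zeta)\otimes I_{\mathcal D})=T_n(\zeta)K(T)^*$ for all $n\in\mathbb{Z_+}$ and $\zeta\in X(n)$. Taking adjoints gives $(S_n(\zeta)\otimes I_{\mathcal D})^*K(T)=K(T)T_n(\zeta)^*$, so each $(S_n(\zeta)\otimes I_{\mathcal D})^*$ carries $\mathrm{ran}\,K(T)$ into itself. Hence $\overline{\mathrm{ran}\,K(T)}$ is coinvariant for the induced representation $S\otimes I_{\mathcal D}$, and its orthogonal complement $\mathcal S:=\ker K(T)^*=(\mathrm{ran}\,K(T))^\perp$ is invariant for $S\otimes I_{\mathcal D}$. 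Once $\mathcal S$ is identified as an $(S\otimes I_{\mathcal D})$-invariant subspace, Theorem \ref{Beur} produces a Hilbert space $\mathcal E$, a representation $\psi$ of $\mathcal M$ on $\mathcal E$, and an inner multi-analytic operator $\Pi:\mathcal F_X\bigotimes_\psi\mathcal E\to\mathcal F_X\bigotimes_\sigma\mathcal D$ with $\mathrm{ran}\,\Pi=\mathcal S$; since $\Pi$ is a partial isometry this forces $\Pi\Pi^*=P_{\mathcal S}$, the orthogonal projection onto $\mathcal S$.

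It remains to match $P_{\mathcal S}$ with $I-K(T)K(T)^*$, and this identification is exactly where I expect the main obstacle to lie. The equality $I-K(T)K(T)^*=P_{\mathcal S}=P_{\ker K(T)^*}$ holds precisely when $K(T)K(T)^*$ is the orthogonal projection onto $\overline{\mathrm{ran}\,K(T)}$, equivalently when $K(T)$ is a partial isometry. Recall from the computations preceding Proposition \ref{proposition1} that $K(T)^*K(T)=I_{\mathcal H}-Q$ with $Q=\mathrm{sot}\text{-}\lim_n\widetilde T_n\widetilde T_n^*$; since an operator is a partial isometry if and only if $K(T)^*K(T)$ is a projection, the required statement reduces to $Q$ being a projection. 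This is automatic in the pure case $Q=0$, where Proposition \ref{prop2} already gives that $K(T)$ is an isometry, and it is the situation governed by Theorem \ref{thm1} and Theorem \ref{Beur}. I would therefore carry out the verification by checking that $K(T)K(T)^*$ is idempotent, i.e. $K(T)\,Q\,K(T)^*=0$, so that $I-K(T)K(T)^*$ is genuinely the projection $P_{\mathcal S}$; feeding this into the previous paragraph then yields $\Pi\Pi^*=I-K(T)K(T)^*$ and completes the proof.
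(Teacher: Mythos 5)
Your proposal follows the paper's proof step for step in its two main moves: from the intertwining relation of Proposition \ref{prop2} you deduce that $\mathcal S:=(\mathrm{ran}\,K(T))^{\perp}$ is invariant for $S\otimes I_{\mathcal D}$, and Theorem \ref{Beur} then produces the inner multi-analytic operator $\Pi$ with $\Pi\Pi^*=P_{\mathcal S}$; this is verbatim the paper's argument. Where you go beyond the paper is in isolating the final identification $P_{\mathcal S}=I-K(T)K(T)^*$ as the genuine issue, and you are right to worry: this is exactly the point the paper glosses over with the line $(\mathrm{ran}\,K(T))^{\perp}=\mathrm{ran}(I-K(T)K(T)^*)$, which is only valid when $K(T)K(T)^*$ is a projection.

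The verification you propose, $K(T)QK(T)^*=0$, cannot however be carried out in general. Since $K(T)QK(T)^*=(K(T)Q^{1/2})(K(T)Q^{1/2})^*$ and $K(T)^*K(T)=I_{\mathcal H}-Q$, one has $K(T)QK(T)^*=0$ iff $K(T)Q=0$ iff $Q=Q^2$: indeed $K(T)Q=0$ gives $(I-Q)Q=K(T)^*K(T)Q=0$, and conversely if $Q$ is idempotent then $\|K(T)Qh\|^2=\langle (I-Q)Qh,Qh\rangle=0$. And $Q$ need not be a projection: take $\mathcal M=\mathbb C$ and $X(n)=\mathbb C$ for all $n$, so that $T$ amounts to a single contraction, and let $T$ be the bilateral weighted shift on $\ell^2(\mathbb Z)$ with weights $w_j=1$ for $j<0$ and $w_j=c\in(0,1)$ for $j\geq 0$; then $Qe_k=e_k$ for $k\leq 0$ and $Qe_k=c^{2k}e_k$ for $k\geq 1$, so $Q$ is not a projection, $I-K(T)K(T)^*$ is not idempotent, and it cannot equal $\Pi\Pi^*$ for any partial isometry $\Pi$. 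So your closing step, and with it the theorem as literally stated, is available exactly when $Q$ is a projection --- in particular when $T$ is pure, where Proposition \ref{prop2} makes $K(T)$ an isometry, $I-K(T)K(T)^*$ is the projection onto $(\mathrm{ran}\,K(T))^{\perp}$, and your argument closes completely. Under that hypothesis your proposal is correct and coincides with the paper's proof; without it, the gap you flagged is real, but it is a gap in the paper's own proof as much as in yours.
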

\begin{proof}
Proposition \ref{prop2} implies that $(ran K(T))^\perp$ is invariant
for the covariant representation $S\otimes I_{\mathcal D}.$ Now we
use Theorem \ref{Beur} and obtain a Hilbert space $\mathcal E,$ a
representation $\psi$ of $\mathcal M$ on $\mathcal E,$ and a partial
isometry $\Pi:\mathcal F_X\bigotimes_\psi\mathcal E
\to\mathcal{F}_X\bigotimes_{\sigma}\mathcal{D} $ such that $(ran
K(T))^\perp$ is the range of $\Pi,$ and
$$
\Pi(S_n(\zeta)\otimes I_{\mathcal E})=(S_n(\zeta)\otimes
I_{\mathcal{D}})\Pi,
$$
for all $\zeta\in X(n)$ and $n\in \mathbb{Z_+}$. Finally, using the
fact that $\Pi$ is a partial isometry and
\[
(ran K(T))^\perp
=ran(I_{\mathcal{F}_X\bigotimes_{\sigma}\mathcal{D}}-K(T)K(T)^*),
\]
we get the desired formula.
\end{proof}

The following is an analogue of \cite[Theorem 3.32]{MR2643314} in
our context in terms of multi-analytic operators.

\bt
Let $X=(X(n))_{n \in {\mathbb{Z_+}}}$ be a standard subproduct
system of left-finite $W^*$-correspondences over a finite factor $\mathcal{M}$. If $T=(T_n)_{n \in \mathbb{Z_+}}$ is a completely
contractive, covariant representation of $X$ on a Hilbert space
$\mathcal{H}$, and
\begin{align}\label{eq1.2}
tr_{(\phi_{\infty}(\mathcal{M})\otimes I_{\mathcal
D})'}(I_{\mathcal{F}_X \bigotimes_{\sigma} \mathcal{D}}-\Theta^k_{S
\otimes I_{\mathcal{D}}}(I_{\mathcal{F}_X \bigotimes_{\sigma}
\mathcal{D}})) < \infty,
\end{align}
for all $k \geq 1$, then there exist a Hilbert space $\mathcal E,$ a
representation $\psi$ of $\mathcal M$ on $\mathcal E,$ and an inner
multi-analytic operator $\Pi:\mathcal F_X\bigotimes_\psi\mathcal E
\to\mathcal{F}_X\bigotimes_{\sigma}\mathcal{D} $ such that
\begin{align*}
&~~~~Curv(T)\\&=\lim_{k \to \infty}\frac{
tr_{(\phi_{\infty}(\mathcal{M})\otimes I_{\mathcal
D})'}((I_{\mathcal{F}_X\bigotimes_{\sigma}\mathcal{D}}-\Pi\Pi^*)(I_{\mathcal{F}_X
\bigotimes_{\sigma} \mathcal{D}}-\Theta_{S \otimes
I_{\mathcal{D}}}^k(I_{\mathcal{F}_X \bigotimes_{\sigma}
\mathcal{D}})))}{\sum_{j=0}^{k-1}dim_l(X(j))}.
\end{align*}
\et
\begin{proof}
For simplicity of notation we use $I$ for $I_{\mathcal{F}_X
\bigotimes_{\sigma} \mathcal{D}}$ and also use $\Theta $ for
$\Theta_{S \otimes I_{\mathcal{D}}}$. Define a representation $\rho$
of $\mathcal M$ on $\mathcal H\bigoplus \left(\mathcal{F}_X
\bigotimes_{\sigma} \mathcal{D}\right)$ by
\[
\rho(a)=\begin{pmatrix}
{\sigma(a)} & {0}  \\
{0} & \phi_{\infty}(a)\otimes I_{\mathcal D}
\end{pmatrix},
\]
for all $a \in \mathcal M$. Then
\begin{align}
\nonumber & tr_{\sigma(\mathcal{M})'}(K(T)^*(I-\Theta^k(I))K(T))
\\
\nonumber&=tr_{\rho(\mathcal{M})'}\begin{pmatrix}
{K(T)^*(I-\Theta^k(I))K(T)} & {0}  \\
{0} & 0
\end{pmatrix}
\\
\nonumber&=tr_{\rho(\mathcal{M})'}\left(\begin{pmatrix}
{0} & {K(T)^*(I-\Theta^k(I))^{\frac{1}{2}})}  \\
    {0} & 0
    \end{pmatrix}
    \begin{pmatrix}
    {0} & {0}  \\
    {(I-\Theta^k(I))^{\frac{1}{2}}K(T)} & 0
    \end{pmatrix}\right)
    \\ \nonumber&=tr_{\rho(\mathcal{M})'}\left( \begin{pmatrix}
    {0} & {0}  \\
    {(I-\Theta^k(I))^{\frac{1}{2}}K(T)} & 0
    \end{pmatrix}\begin{pmatrix}
    {0} & {K(T)^*(I-\Theta^k(I))^{\frac{1}{2}})}  \\
    {0} & 0
    \end{pmatrix}
\right)
    \\ \nonumber&=tr_{\rho(\mathcal{M})'}\begin{pmatrix}
    {0} & {0}  \\
    {0} & (I-\Theta^k(I))^{\frac{1}{2}}K(T)K(T)^*(I-\Theta^k(I))^{\frac{1}{2}}
    \end{pmatrix}
        \\ \label{eqna1}&=  tr_{(\phi_{\infty}(\mathcal{M})\otimes I_{\mathcal D})'}((I-\Theta^k(I))^{\frac{1}{2}}K(T)K(T)^*(I-\Theta^k(I))
        ^{\frac{1}{2}}).
    \end{align}
Now by Theorem \ref{theorem1}, there exist a Hilbert space $\mathcal
E,$ a representation $\psi$ of $\mathcal M$ on $\mathcal E$, and an
inner multi-analytic operator $\Pi:\mathcal
F_X\bigotimes_\psi\mathcal E
\to\mathcal{F}_X\bigotimes_{\sigma}\mathcal{D}$ such that
\begin{align*}
&~~~~Curv(T)\\&
=\lim_{k \to \infty}\frac{
tr_{(\phi_{\infty}(\mathcal{M})\otimes I_{\mathcal
D})'}((I_{\mathcal{F}_X\bigotimes_{\sigma}\mathcal{D}}-\Pi\Pi^*)(I_{\mathcal{F}_X
\bigotimes_{\sigma} \mathcal{D}}-\Theta_{S \otimes
I_{\mathcal{D}}}^k(I_{\mathcal{F}_X \bigotimes_{\sigma}
\mathcal{D}})))}{\sum_{j=0}^{k-1}dim_l(X(j))}.
\end{align*}

Then equation (\ref{eqna1}) and Proposition \ref{proposition1} yields
\begin{align*}
Curv(T)
=&\lim_{k \to \infty}\frac{tr_{\sigma(\mathcal{M})'}(K(T)^*(I-\Theta^k(I))K(T))}{\sum_{j=0}^{k-1}dim_l(X(j))}
\\
=&\lim_{k \to \infty}\frac{ tr_{(\phi_{\infty}(\mathcal{M})\otimes
I_{\mathcal
D})'}((I-\Theta^k(I))^{\frac{1}{2}}K(T)K(T)^*(I-\Theta^k(I))^{\frac{1}{2}})}{\sum_{j=0}^{k-1}dim_l(X(j))}
\\
= &\lim_{k \to \infty}\frac{ tr_{(\phi_{\infty}(\mathcal{M})\otimes I_{\mathcal D})'}
((I-\Theta^k(I))^{\frac{1}{2}}(I-\Pi\Pi^*)(I-\Theta^k(I))^{\frac{1}{2}})}{\sum_{j=0}^{k-1}dim_l(X(j))}
\\
=&\lim_{k \to \infty}\frac{ tr_{(\phi_{\infty}(\mathcal{M})\otimes
I_{\mathcal
D})'}((I-\Pi\Pi^*)(I-\Theta^k(I)))}{\sum_{j=0}^{k-1}dim_l(X(j))}.
\end{align*}
The third equality follows from the observation that: since
$tr_{(\phi_{\infty}(\mathcal{M})\otimes I_{\mathcal
D})'}(I-\Theta^k(I))$ is finite, $(I-\Theta^k(I))^{\frac{1}{2}}$
belongs to the ideal $$\{x:tr_{(\phi_{\infty}(\mathcal{M})\otimes
I_{\mathcal D})'}(x^*x)<\infty\},$$ and hence
$tr_{(\phi_{\infty}(\mathcal{M})\otimes I_{\mathcal
D})'}(I-\Theta^k(I))^{\frac{1}{2}}<\infty$, for all $k$. \qedhere
\end{proof}

\begin{remark}
Consider the standard subproduct system of Example \ref{eg-3}, and
let $\mbox{dim~} X(1) = d<\infty$. It is easy to verify that
\begin{align*}
dim_l(X(j))&= \binom{j+d-1}{j},
\end{align*}
for all $j \geq 1$. By induction it follows that
\begin{align*}
\displaystyle \sum^{k-1}_{j=0} dim_l(X(j))&=\frac{k(k+1)\ldots
(k+d-1)}{d!}\sim\frac{k^d}{d!} \quad \quad (k \geq 1).
\end{align*}
Therefore, in this case, our curvature defined in (\ref{eqn_curv})
coincides with the Arveson's curvature for the row contraction
\[
(T_1(e_1),T_1(e_2),\ldots,T_1(e_d))
\]
(under the finite rank assumption, that is, $rank~(I_{\mathcal H} -
\sum_{i=1}^d T_1(e_i) T_1(e_i)^*) < \infty$) on the Hilbert space
$\mathcal H$ (cf. \cite[Theorem C]{A2000}). One can also compare the
curvature obtained in (\ref{eqn2}) with Popescu's curvature (cf.
\cite[Equation 2.11]{MR1822685}). Furthermore, observe that the
condition (\ref{eq1.2}) is automatic for finite rank row
contractions.
\end{remark}

\section{Wandering subspaces}

The notion of wandering subspaces of bounded linear operators on
Hilbert spaces was introduced by Halmos \cite{PH}. With this as a
motivation we extend the notion of wandering subspace (cf. \cite[p.
561]{H16}) for covariant representations of standard subproduct
systems, as follows: Let $T=(T_n)_{n \in \mathbb{Z_+}}$ be a
covariant  representation of a standard subproduct system
$X=(X(n))_{n \in \mathbb{Z_+}}$ over an $A^*$-algebra $\mathcal M$.
A closed subspace $\mathcal{S}$ of $\mathcal{H}$ is
called \textit{wandering} for the covariant  representation $T$ if
it is $\sigma(\mathcal M)$-invariant, and if for each $n \in \mathbb{N}$ the subspace $\mathcal S$ is orthogonal to
\[
\mathfrak{L}_n(\mathcal{S},T):=\bigvee \{T_n(p_n(\zeta))s\: : \:
\zeta \in E^{\otimes n},s \in \mathcal{S}\}.
\]
When there is no confusion we use the notation
$\mathfrak{L}_n(\mathcal{S})$ for $\mathfrak{L}_n(\mathcal{S},T),$
and also use $\mathfrak{L}(\mathcal{S})$ for
$\mathfrak{L}_1(\mathcal{S})$. A wandering subspace $\mathcal W$ for $T$ is called
\textit{generating} if $\mathcal H=\overline{\mbox{span}}
\{\mathfrak L_n (\mathcal W):n\in\mathbb Z_+\}$.

In the following proposition we prove that the wandering subspaces
are naturally associated with invariant subspaces of covariant
representations of standard subproduct systems.

\bpp \label{prop5.1} Let $T=(T_n)_{n \in \mathbb{Z_+}}$ be a covariant
representation of a standard subproduct system $X=(X(n))_{n \in
\mathbb{Z_+}}$ over an $A^*$-algebra $\mathcal M$. If $\mathcal S$
is a closed $T$-invariant subspace of $\mathcal{H},$ then
$\mathcal{S}\ominus\mathfrak{L}(\mathcal{S})$ is a wandering
subspace for $T|_{\mathcal S}:=({T_n}|_{\mathcal S})_{n\in
\mathbb{Z_+}}.$ \epp

\bp Let $n \geq 1$ and $\eta=\xi_1\otimes\xi_{n-1} \in E^{\otimes
n}$ for some $\xi_1 \in E$ and $\xi_{n-1}\in E^{\otimes n-1}$. Let
$x, s \in \mathcal{S}\ominus \mathfrak{L}(\mathcal{S})$ so that $
y=T_n(p_n(\eta))s \in
\mathfrak{L}_n(\mathcal{S}\ominus\mathfrak{L}(\mathcal{S}))$. Then
\[
\begin{split}
\langle x, y \rangle & = \langle x ,T_n(p_n(\eta))s\rangle
\\
& =\langle x, T_n(p_n(\xi_1 \otimes \xi_{n-1}))s \rangle
\\
& =\langle x,T_{1+(n-1)}(p_{1+(n-1)}(\xi_1 \otimes \xi_{n-1}))s\rangle \\
& = \langle x,T_1(\xi_1)T_{n-1}(\xi_{n-1})s\rangle
\\
& =0,
\end{split}
\]
since $\mathcal{S}$ in invariant under $T_{n-1}(\xi_{n-1})$. Therefore $
\mathcal{S}\ominus \mathfrak{L}(\mathcal{S})$ is orthogonal to
$\mathfrak{L}_n(\mathcal{S}\ominus\mathfrak{L}(\mathcal{S}))$, $n
\geq 1$ and hence $\mathcal{S}\ominus\mathfrak{L}(\mathcal{S})$ is a
wandering subspace for $T|_{\mathcal S}=({T_n}|_{\mathcal S})_{n\in
\mathbb{Z_+}}$.  \ep

Let $T=(T_n)_{n \in \mathbb{Z_+}}$ be a covariant  representation of
a standard subproduct system $X=(X(n))_{n \in \mathbb{Z_+}}$.
Suppose $\mathcal W$ is a wandering subspace for $T.$ Set
\[
\mathcal G_{T,\mathcal W}:=\bigvee_{n\in\mathbb Z_+}\mathfrak
L_n(\mathcal W).
\]
Note that
\begin{align*}
&\mathfrak L\left(\bigvee_{n\in\mathbb Z_+}\mathfrak L_n(\mathcal W)\right) \\& =
\overline{\mbox{span}}\{T_1(p_1(\zeta))T_n(p_n(\eta))w:\zeta\in E, \eta\in E^{\otimes n},w\in \mathcal W, n\in\mathbb Z_+\}
\\
& =  \overline{\mbox{span}}\{T_{n+1}(p_{n+1}(p_1(\zeta)\otimes
p_n(\eta))w:\zeta\in E, \eta\in E^{\otimes n}, w\in \mathcal W,
n\in\mathbb Z_+\}\\ &\subset \bigvee_{n\in\mathbb N}\mathfrak
L_n(\mathcal W).
\end{align*}
In the other direction, we have
\begin{align*}
&\bigvee_{n\in\mathbb N}\mathfrak L_n(\mathcal W)
\\&= \overline{\mbox{span}}\{T_{n}(p_{n}(p_1(\zeta)\otimes p_{n-1}(\eta))w:\zeta\in E, \eta\in E^{\otimes {n-1}},w\in \mathcal W, n\in\mathbb N\}
\\
& =  \overline{\mbox{span}}\{T_{1}(p_1(\zeta))T_{n-1}( p_{n-1}(\eta))w:\zeta\in E, \eta\in E^{\otimes {n-1}},w\in
\mathcal W, n\in\mathbb N\}
\\
&\subset \mathfrak L\left(\bigvee_{n\in\mathbb Z_+}\mathfrak
L_n(\mathcal W)\right).
\end{align*}
Thus these sets are equal, and it follows that
\begin{align*}
\mathcal G_{T,\mathcal W}\ominus \mathfrak L(\mathcal  G_{T,\mathcal
W})=\bigvee_{n\in\mathbb Z_+}\mathfrak L_n(\mathcal W)\ominus
\mathfrak L\left(\bigvee_{n\in\mathbb Z_+}\mathfrak L_n(\mathcal
W)\right)=\mathcal W.
\end{align*}

Hence we have the following uniqueness result:

\begin{proposition}\label{prop6}
Let $T=(T_n)_{n \in \mathbb{Z_+}}$ be a covariant  representation of
a standard subproduct system $X=(X(n))_{n \in \mathbb{Z_+}}$ over an
$A^*$-algebra $\mathcal M$. If $\mathcal W$ is a wandering subspace
for $T,$ then
$$\mathcal W=\mathcal  G_{T,\mathcal W}\ominus \mathfrak L(\mathcal  G_{T,\mathcal W}).$$
Moreover, if $\mathcal W$ is also generating, then $\mathcal
W=\mathcal H\ominus \mathfrak L(\mathcal H).$
\end{proposition}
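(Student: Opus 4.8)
The plan is to read off the first identity directly from the set-theoretic computation carried out immediately before the statement, and then to obtain the ``moreover'' clause by specialization. First I would record the two facts that that computation supplies: by definition $\mathcal G_{T,\mathcal W}=\bigvee_{n\in\mathbb{Z}_+}\mathfrak L_n(\mathcal W)$, while the displayed chain of mutual inclusions shows $\mathfrak L(\mathcal G_{T,\mathcal W})=\bigvee_{n\in\mathbb N}\mathfrak L_n(\mathcal W)$. Thus the only summand of $\mathcal G_{T,\mathcal W}$ not already present in $\mathfrak L(\mathcal G_{T,\mathcal W})$ is the $n=0$ term $\mathfrak L_0(\mathcal W)$.

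Next I would identify $\mathfrak L_0(\mathcal W)$ with $\mathcal W$. Since $E^{\otimes 0}=\mathcal M$, $p_0=I$ and $T_0=\sigma$, we have $\mathfrak L_0(\mathcal W)=\overline{\sigma(\mathcal M)\mathcal W}$; because $\mathcal W$ is $\sigma(\mathcal M)$-invariant and $\sigma$ is non-degenerate (so its restriction to the reducing subspace $\mathcal W$ is again non-degenerate), this closed span is exactly $\mathcal W$. Consequently $\mathcal G_{T,\mathcal W}=\mathcal W\vee\mathfrak L(\mathcal G_{T,\mathcal W})$.

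The key remaining step is to upgrade this span to an orthogonal direct sum. The wandering hypothesis says precisely that $\mathcal W\perp\mathfrak L_n(\mathcal W)$ for every $n\geq 1$, hence $\mathcal W\perp\bigvee_{n\geq 1}\mathfrak L_n(\mathcal W)=\mathfrak L(\mathcal G_{T,\mathcal W})$. Therefore $\mathcal G_{T,\mathcal W}=\mathcal W\oplus\mathfrak L(\mathcal G_{T,\mathcal W})$ is an orthogonal decomposition, which yields $\mathcal W=\mathcal G_{T,\mathcal W}\ominus\mathfrak L(\mathcal G_{T,\mathcal W})$, the first assertion. Finally, if $\mathcal W$ is generating then by definition $\mathcal H=\overline{\mathrm{span}}\{\mathfrak L_n(\mathcal W):n\in\mathbb{Z}_+\}=\mathcal G_{T,\mathcal W}$; substituting $\mathcal H$ for $\mathcal G_{T,\mathcal W}$ in the first identity, so that $\mathfrak L(\mathcal G_{T,\mathcal W})=\mathfrak L(\mathcal H)$, gives $\mathcal W=\mathcal H\ominus\mathfrak L(\mathcal H)$.

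I do not expect a genuine obstacle here, since the heavy lifting — the equality $\mathfrak L(\mathcal G_{T,\mathcal W})=\bigvee_{n\in\mathbb N}\mathfrak L_n(\mathcal W)$ — is already established before the statement via the associativity relation for the projections $p_{n+1}(p_1\otimes p_n)$ together with the covariance identity $T_{n+1}(p_{n+1}(\cdot))=T_1(\cdot)T_n(\cdot)$. The one point deserving care is the identification $\mathfrak L_0(\mathcal W)=\mathcal W$, which silently relies on non-degeneracy of $\sigma$; without it one would only obtain $\mathfrak L_0(\mathcal W)\subseteq\mathcal W$, and the clean orthogonal complement relation could fail.
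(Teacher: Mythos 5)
Your proof is correct and follows essentially the same route as the paper, which derives the proposition directly from the computation $\mathfrak L\bigl(\bigvee_{n\in\mathbb Z_+}\mathfrak L_n(\mathcal W)\bigr)=\bigvee_{n\in\mathbb N}\mathfrak L_n(\mathcal W)$ together with the orthogonality built into the wandering hypothesis and the substitution $\mathcal G_{T,\mathcal W}=\mathcal H$ in the generating case. Your explicit identification $\mathfrak L_0(\mathcal W)=\overline{\sigma(\mathcal M)\mathcal W}=\mathcal W$ via non-degeneracy of $\sigma$ is a detail the paper leaves implicit, and you are right that it is needed for the clean orthogonal decomposition.
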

In Theorem \ref{thm1} we observed that each non-trivial closed
subspace $\mathcal S\subset\mathcal H,$ which is invariant under a
pure completely contractive, covariant  representation $T=(T_n)_{n
\in \mathbb{Z_+}}$ of a standard subproduct system $X=(X(n))_{n \in
\mathbb{Z_+}},$ can be written as $\mathcal
S=\Pi(\mathcal{F}_X\bigotimes_{\pi}\mathcal{D}).$ In the following
theorem we study wandering subspaces in a general situation when $T$
is not necessarily pure.

\bt \label{Theorem 5.3} Let $X=(X(n))_{n \in \mathbb{Z}_+}$ be a standard subproduct
system over an $A^*$-algebra $\mathcal M$. Let $\pi:\mathcal M\to
B(\mathcal E)$ be a representation on a Hilbert space $\mathcal E$
and  $T=(T_n)_{n \in \mathbb{Z}_+}$ be the covariant representation
of $X$. Let $\Pi: \mathcal{F}_X \bigotimes_\pi \mathcal{E} \to
\mathcal{H}$ be a partial isometry such that $\Pi(S_n(\zeta)\otimes
I_{\mathcal{E}})=T_n(\zeta)\Pi$ for every $\zeta \in X(n), n \in
\mathbb{Z}_+$. Then $\mathcal{S}:=\Pi(\mathcal{F}_X \bigotimes_\pi
\mathcal{E})$ is a closed $T$-invariant subspace,
$\mathcal{W}:=\mathcal{S} \ominus \mathfrak{L}(\mathcal{S})$ is a
wandering subspace for $T|_{\mathcal{S}},$ and $\mathcal{W}=\Pi((ker
\Pi)^{\bot}\bigcap \mathcal{M}\bigotimes_\pi \mathcal{E})$. \et

\bp Define $F=(ker \Pi)^{\bot}\bigcap \mathcal{M}\bigotimes_\pi
\mathcal{E}$. Since $\mathcal{S}$ is the range of $\Pi,$ it is a
closed $T$-invariant subspace. Therefore by
Proposition \ref{prop5.1},  the subspace $\mathcal{W}$ is a wandering subspace for
$T|_{\mathcal{S}}$.
$$
\begin{array}{cl}
&\mathfrak{L}(\mathcal S, T) \\& = \mathfrak{L}(\Pi(\mathcal{F}_X
\bigotimes_\pi \mathcal{E}),T)
\\& =  \bigvee\{ \: T_1(\zeta)k \:\: : \:\: k \in\Pi(\mathcal{F}_X \bigotimes_\pi \mathcal{E}),\zeta\in X(1) \} \\
& = \bigvee\{ \: T_1(\zeta)\Pi(l) \:\: : \:\: l \in \mathcal{F}_X \bigotimes_\pi \mathcal{E},\zeta\in X(1) \}\\
& = \bigvee\{ \: \Pi(S_1(\zeta) \otimes I_{\mathcal{E}})(l_m\otimes
e) \:\: : \:\: l_m\otimes e \in X(m) \bigotimes_\pi
\mathcal{E},\zeta\in X(1), m \in \mathbb{Z}_+ \}.
\end{array}
$$
For $x \in (ker \Pi)^{\bot}\bigcap  \mathcal{M}\bigotimes_\pi
\mathcal{E}$ and $ l_m\otimes e \in X(m) \bigotimes_\pi \mathcal{E}$
we have
$$
\begin{array}{ll}
\langle \Pi x,\Pi(S_1(\zeta) \otimes I_{\mathcal{E}})(l_m\otimes e) \rangle & = \langle \Pi ^*\Pi x,(S_1(\zeta) \otimes I_{\mathcal{E}})(l_m\otimes e)\rangle
\\
& = \langle  x,(S_1(\zeta) \otimes I_{\mathcal{E}})(l_m\otimes
e)\rangle\\&= \langle  x,P_{1+m}(\zeta\otimes l_m) \otimes e\rangle
\\ & =0,
\end{array}
$$
and hence $\Pi((ker \Pi)^{\bot}\bigcap \mathcal{M}\bigotimes_\pi
\mathcal{E})\subset\mathcal W$.

\noindent For the converse direction, let $x \in \mathcal{S} \ominus
\mathfrak{L}(\mathcal{S},T)=\mathcal W$, and $\Pi(y)=x$ for some
$y\in (ker\Pi)^{\bot}$. Therefore for any $\zeta \in X(1),\eta
\otimes e\in \mathcal{F}_X\bigotimes_\pi \mathcal E$ we have
\begin{eqnarray}\label{err1}
\langle y,(S_1(\zeta)\otimes I_{\mathcal{E}})(\eta \otimes
e)\rangle&=&\langle \Pi y,\Pi(S_1(\zeta)\otimes
I_{\mathcal{E}})(\eta \otimes e)\rangle=0.
\end{eqnarray}
Recall that by definition we have
\begin{align*}
&\mathfrak L(\mbox{$\mathcal{F}_X
\bigotimes_\pi \mathcal{E}$},S\otimes I_{\mathcal
E})\\&=\bigvee\{(S_1(\zeta) \otimes I_{\mathcal{E}})(\eta \otimes e) :
\eta \in X(m), \zeta \in X(1), e \in \mathcal{E}, m \in
\mathbb{Z}_+\}.
\end{align*}
Since $\mathcal M\bigotimes_\pi \mathcal E$ is a generating
wandering subspace for the covariant representation $S\otimes
I_{\mathcal E},$ it follows from Proposition \ref{prop6} that
$(\mathcal{F}_X \bigotimes_\pi \mathcal{E}) \ominus\mathfrak
L(\mathcal{F}_X \bigotimes_\pi \mathcal{E},S\otimes I_{\mathcal
E})=\mathcal M\bigotimes_\pi\mathcal{E}$, and hence (\ref{err1})
implies that $y\in \mathcal M\bigotimes_\pi\mathcal{E}$. Hence we get
$\Pi((ker \Pi)^{\bot}\bigcap \mathcal{M}\bigotimes_\pi
\mathcal{E})=\mathcal W$.
\ep

Since each commuting tuple of operators defines a covariant
representation, the previous theorem is a generalization of
\cite[Theorem 5.2]{BEDS}. Indeed, we get the following corollary:

\begin{corollary}
With the same notation of Theorem \ref{Theorem 5.3} we have $$\bigvee_{n
\in \mathbb{Z_+}}\mathfrak L_n(\mathcal W,T)=\Pi\left(\bigvee_{n \in
\mathbb{Z_+}}\mathfrak{L}_n(F,S\otimes I_{\mathcal E})\right )$$
where  $F=(ker\Pi)^{\bot}\bigcap \mathcal{M}
\mbox{$\bigotimes_{\pi}$}\mathcal{E}$. Moreover, $F$ is wandering
subspace for the representation $S\otimes I_{\mathcal{E}}$, that is,
$F\bot \mathfrak{L}_n(F,S\otimes I_{\mathcal E})$ for each
$n\in\mathbb N.$
\end{corollary}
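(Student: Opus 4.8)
The plan is to prove the two assertions separately, deriving the displayed equality from the intertwining relation $\Pi(S_n(\zeta)\otimes I_{\mathcal E})=T_n(\zeta)\Pi$ together with the partial-isometry structure of $\Pi$, and then establishing the wandering property of $F$ by comparing $F$ with the generating wandering subspace $\mathcal M\bigotimes_\pi\mathcal E$ of $S\otimes I_{\mathcal E}$.

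For the displayed identity I would first unwind the generators. Since $\mathcal W=\Pi(F)$ by Theorem \ref{Theorem 5.3}, every generator of $\mathfrak L_n(\mathcal W,T)$ has the form $T_n(p_n(\zeta))\Pi f$ with $\zeta\in E^{\otimes n}$ and $f\in F$, and the intertwining relation rewrites this as $\Pi\big((S_n(p_n(\zeta))\otimes I_{\mathcal E})f\big)$, i.e. as $\Pi$ applied to a generator of $\mathfrak L_n(F,S\otimes I_{\mathcal E})$. Thus the generating set of $\mathfrak L_n(\mathcal W,T)$ is exactly the $\Pi$-image of the generating set of $\mathfrak L_n(F,S\otimes I_{\mathcal E})$. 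Writing $G:=\bigvee_{n}\mathfrak L_n(F,S\otimes I_{\mathcal E})$ and $\mathcal G:=\bigvee_n\mathfrak L_n(\mathcal W,T)$, continuity of $\Pi$ gives at once $\Pi(G)\subseteq\mathcal G$ and $\mathcal G\subseteq\overline{\Pi(G)}$. To upgrade this to the stated equality I would exploit that $\Pi$ is isometric on $(\ker\Pi)^{\perp}$: setting $\widetilde G:=\Pi^{*}\mathcal G\subseteq(\ker\Pi)^{\perp}$, the relation $\Pi^{*}\Pi=P_{(\ker\Pi)^{\perp}}$ together with the generator computation shows $\widetilde G=\overline{P_{(\ker\Pi)^{\perp}}G}$, and since $\Pi$ is isometric on $(\ker\Pi)^{\perp}$ and $\Pi P_{(\ker\Pi)^{\perp}}=\Pi$, one obtains $\mathcal G=\Pi(\widetilde G)=\overline{\Pi(G)}$, which is the range of $\Pi$ appearing in the statement.

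The main obstacle here is precisely this closure matching, namely passing from equality of the generating sets to equality of the closed subspaces $\mathcal G$ and $\Pi(G)$. Because $\ker\Pi$ is invariant under $S\otimes I_{\mathcal E}$ (if $\Pi k=0$ then $\Pi(S_n(\zeta)\otimes I_{\mathcal E})k=T_n(\zeta)\Pi k=0$) but is not in general reducing, the subspace $(\ker\Pi)^{\perp}$ is only co-invariant; hence $G$ need not be contained in $(\ker\Pi)^{\perp}$, $\Pi|_{G}$ need not be isometric, and $\Pi(G)$ must be understood through the partial-isometry factorisation above rather than as a naive isometric image. This is the step I would treat most carefully.

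For the \emph{moreover} statement I would compare $F$ with $\mathcal M\bigotimes_\pi\mathcal E$. In the proof of Theorem \ref{Theorem 5.3} it is recorded that $\mathcal M\bigotimes_\pi\mathcal E$ is a generating wandering subspace for $S\otimes I_{\mathcal E}$; in particular $\mathcal M\bigotimes_\pi\mathcal E$ is orthogonal to $\mathfrak L_n(\mathcal M\bigotimes_\pi\mathcal E,S\otimes I_{\mathcal E})$ for every $n\geq 1$. Since $F\subseteq\mathcal M\bigotimes_\pi\mathcal E$, monotonicity of $\mathfrak L_n$ gives $\mathfrak L_n(F,S\otimes I_{\mathcal E})\subseteq\mathfrak L_n(\mathcal M\bigotimes_\pi\mathcal E,S\otimes I_{\mathcal E})$, whence $F\perp\mathfrak L_n(F,S\otimes I_{\mathcal E})$ for all $n\geq 1$. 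Finally I would check that $F$ is $\sigma$-invariant for the representation $\phi_{\infty}\otimes I_{\mathcal E}$: since $\ker\Pi$ is invariant under $\phi_{\infty}(a)\otimes I_{\mathcal E}$ for every $a\in\mathcal M$, letting $a$ range over all of $\mathcal M$ shows $(\ker\Pi)^{\perp}$ reduces $\phi_{\infty}(\mathcal M)\otimes I_{\mathcal E}$; as $\mathcal M\bigotimes_\pi\mathcal E$ is plainly invariant, so is the intersection $F$. This establishes that $F$ is wandering for $S\otimes I_{\mathcal E}$.
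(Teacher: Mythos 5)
Your proof is correct, and it diverges from the paper's in instructive ways (in your notation $\mathcal G=\bigvee_{n}\mathfrak L_n(\mathcal W,T)$ and $G=\bigvee_{n}\mathfrak L_n(F,S\otimes I_{\mathcal E})$). For the displayed equality you and the paper share the same core step --- the intertwining relation turns the generators $T_n(p_n(\zeta))\Pi f$ of $\mathfrak L_n(\mathcal W,T)$ into $\Pi$-images of the generators of $\mathfrak L_n(F,S\otimes I_{\mathcal E})$ --- but the paper then simply interchanges $\Pi$ with $\bigvee$ in a single line, an interchange that is not automatic because the image of a closed subspace under a partial isometry need not be closed. Your $\Pi^{*}$-bookkeeping supplies exactly the justification the paper omits: $\widetilde G=\Pi^{*}\mathcal G$ is closed since $\mathcal G\subseteq\operatorname{ran}\Pi$ and $\Pi^{*}$ is isometric on $\operatorname{ran}\Pi$, and $\Pi\widetilde G=P_{\mathcal S}\mathcal G=\mathcal G$ gives $\mathcal G=\overline{\Pi(G)}$. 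Note only that the literal identity $\mathcal G=\Pi(G)$ would in addition require $P_{(\ker\Pi)^{\perp}}G$ to be closed, a point neither you nor the paper settles; your closure-decorated conclusion is the defensible reading of the statement, and your explicit flagging of the obstruction (that $\ker\Pi$ is merely invariant, not reducing, for $S\otimes I_{\mathcal E}$) is a genuine improvement over the paper's silent step. For the \emph{moreover} part your route is genuinely different: you observe $F\subseteq\mathcal M\bigotimes_\pi\mathcal E$ and inherit the orthogonality $F\perp\mathfrak L_n(F,S\otimes I_{\mathcal E})$ from the generating wandering subspace $\mathcal M\bigotimes_\pi\mathcal E$ by monotonicity of $\mathfrak L_n$, never touching $\Pi$ at all, whereas the paper computes, for $f,f'\in F$, $\langle f,(S_n(\zeta)\otimes I_{\mathcal E})f'\rangle=\langle\Pi^{*}\Pi f,(S_n(\zeta)\otimes I_{\mathcal E})f'\rangle=\langle\Pi f,T_n(\zeta)\Pi f'\rangle=0$, pulling the orthogonality back from the wandering property of $\mathcal W=\Pi F$ established in Theorem \ref{Theorem 5.3}. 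Your version is more elementary and isolates what is actually used (that $F$ sits inside a wandering subspace), and you additionally verify the $\sigma(\mathcal M)$-invariance of $F$ via the $n=0$ case of the intertwining relation, a requirement of the paper's definition of a wandering subspace that the paper's own proof leaves tacit.
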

\bp

For each $f,f' \in F $ we have
\begin{eqnarray*}\langle f, (S_n(\zeta)\otimes I_{\mathcal{E}})f'\rangle&=&
\langle \Pi^{*}\Pi f,(S_n(\zeta)\otimes I_{\mathcal{E}})f'
\rangle=\langle \Pi f,\Pi(S_n(\zeta)\otimes I_{\mathcal{E}})f'
\rangle\\&=&0.
\end{eqnarray*}
Therefore $F$ is wandering subspace for the representation $S\otimes
I_{\mathcal E}$. Moreover, since $\mathcal{W}=\Pi F$, we have that
\[
\begin{split}
\bigvee\{\mathfrak L_n(\mathcal W,T) : n \in \mathbb{Z_+}\} & =
\bigvee\{(T_n(\zeta)\Pi(F) : \zeta \in X(n), n \in \mathbb{Z_+}\}
\\
&= \bigvee\{\Pi(S_n(\zeta)\otimes I_{\mathcal{E}})(F) : \zeta \in
X(n), n \in \mathbb{Z_+}\}
\\
&= \Pi(\bigvee \{ (S_n(\zeta)\otimes I_{ \mathcal{E}})(F) : \zeta \in X(n),  n \in \mathbb{Z_+}\})\\
&= \Pi(\bigvee\{\mathfrak{L}_n(F,S\otimes I_{\mathcal E}) : n \in
\mathbb{Z_+}\} ).\qedhere
\end{split}
\]
 \ep

\vspace{0.5cm}

\noindent{\bf Acknowledgements:} We would like to sincerely thank
the anonymous referee for valuable suggestions that improved the
content of the Section 4. The research of Sarkar was supported in
part by (1) National Board of Higher Mathematics (NBHM), India,
grant NBHM/R.P.64/2014, and (2) Mathematical Research Impact Centric
Support (MATRICS) grant, File No : MTR/2017/000522, by the Science
and Engineering Research Board (SERB), Department of Science \&
Technology (DST), Government of India. Trivedi thanks  Indian
Statistical Institute Bangalore for the visiting scientist
fellowship. Veerabathiran was supported by DST-Inspire fellowship.

\end{document}